\newcommand\rurl[1]{%
  \href{https://#1}{\nolinkurl{#1}}%
}
\newtheorem{definition}{Definition}[section]
\newtheorem{theorem}[definition]{Theorem}
\newtheorem{lemma}[definition]{Lemma}
\newtheorem{corollary}[definition]{Corollary}
\newtheorem{proposition}[definition]{Proposition}
\newtheorem{problem}[definition]{Problem}
\newtheorem{remark}[definition]{Remark}
\newtheorem*{theorem*}{Theorem}
\newtheorem*{definition*}{Definition}
\def\N{{\mathbb N}}
\def\Z{{\mathbb Z}}
\def\R{{\mathbb R}}
\def\T{{\mathbb T}}
\def\C{{\mathbb C}}
\def\Q{{\mathbb Q}}
\newcommand{\lspan}{{\mathrm{span} }}
\newcommand{\lt}{{L^2(\R)}}
\newcommand{\s}{{\mathfrak{s}}}
\newcommand{\ift}{{\mathcal{F}^{-1}}}
\newcommand{\ft}{{\mathcal{F}}}
\newcommand\thankssymb[1]{\textsuperscript{\@fnsymbol{#1}}}
\def\@makefnmark{%
  \leavevmode
  \raise.9ex\hbox{\fontsize\sf@size\z@\normalfont\tiny\@thefnmark}}
\def\bign#1{\mathclose{\hbox{$\left#1\vbox to8.5\p@{}\right.\n@space$}}\mathopen{}}
\newcommand\textoverset[3][]{\mathrel{\overset{\scriptsize{\eqmakebox[#1]{#2}}}{#3}}}
\begin{document}

\title[Discretization barriers in STFT phase retrieval]{On foundational discretization barriers in STFT phase retrieval}

\author[Philipp Grohs]{Philipp Grohs\thankssymb{1}\textsuperscript{,}\thankssymb{2}\textsuperscript{,}\thankssymb{3}}
\address{\thankssymb{1}Faculty of Mathematics, University of Vienna, Oskar-Morgenstern-Platz 1, 1090 Vienna, Austria}
\address{\thankssymb{2}Research Network DataScience@UniVie, University of Vienna, Kolingasse 14-16, 1090 Vienna, Austria}
\address{\thankssymb{3}Johann Radon Institute of Applied and Computational Mathematics, Austrian Academy of Sciences, Altenbergstrasse 69, 4040 Linz, Austria}
\email{philipp.grohs@univie.ac.at, philipp.grohs@oeaw.ac.at}
\author[Lukas Liehr]{Lukas Liehr\thankssymb{1}}
\email{lukas.liehr@univie.ac.at}

\date{\today}
\maketitle

\begin{abstract}
We prove that there exists \emph{no} window function $g \in \lt$ and \emph{no} lattice $\mathcal{L} \subset \R^2$ such that every $f \in \lt$ is determined up to a global phase by spectrogram samples $|V_gf(\mathcal{L})|$ where $V_gf$ denotes the short-time Fourier transform of $f$ with respect to $g$. Consequently, the forward operator
$$
f \mapsto |V_gf(\mathcal{L})|
$$
mapping a square-integrable function to its spectrogram samples on a lattice is never injective on the quotient space $\lt\bign/\sim$ with $f \sim h$ identifying two functions which agree up to a multiplicative constant of modulus one. We will further elaborate this result and point out that under mild conditions on the lattice $\mathcal{L}$, functions which produce identical spectrogram samples but do not agree up to a unimodular constant can be chosen to be real-valued. The derived results highlight that in the discretization of the STFT phase retrieval problem from lattice measurements, a prior restriction of the underlying signal space to a proper subspace of $\lt$ is inevitable.

\vspace{2em}
\noindent \textbf{Keywords.} phase retrieval, time-frequency analysis, short-time Fourier transform, lattices, sampling, signal reconstruction

\noindent \textbf{AMS subject classifications.} 42A38, 44A15, 94A12, 94A20
\end{abstract}

\section{Introduction}

The problem of recovering a function $f \in \lt$ from the absolute value of its short-time Fourier transform (STFT) has attracted a great deal of attention in recent years. This so-called \emph{STFT phase retrieval problem} has seen tremendous development of the corresponding theory and applications. It arises, for instance, in ptychography, which provides an attractive setting for the detailed understanding of the structure of materials \cite{Pfeiffer2018, RODENBURG200887}.
Starting from a window function $g \in \lt$ and a set $\mathcal{L} \subseteq \R^2$ of the time-frequency plane, one aims at recovering $f \in \lt$ from phaseless measurements of the form
$$
|V_gf(\mathcal{L})| = \left \{ |V_gf(x,\omega)| : (x,\omega) \in \mathcal{L} \right \}
$$
with $V_gf$ denoting the short-time Fourier transform of $f$ with respect to the window function $g$, defined by
\begin{equation}\label{def:stft}
    V_gf(x,\omega) = \int_\R f(t)\overline{g(t-x)}e^{-2\pi i \omega t} \, dt.
\end{equation}
The modulus of the STFT, $|V_gf|$, is called the spectrogram and it measures the distribution of the time-frequency content of $f$. Note that two functions $f,h$ which agree up to a global phase, i.e. there exists a constant $\nu \in \T \coloneqq \{ z \in \C : |z|=1 \}$ of modulus one such that $f = \nu h$, produce identical spectrograms. It follows that a reconstruction of $f $ from $|V_gf(\mathcal{L})|$ is only possible up to the ambiguity of a global phase factor. 
While the classical Fourier phase retrieval problem and Pauli problem suffer from non-trivial ambiguities or non-uniqueness, the usage of the STFT constitutes an attractive transform since suitable assumptions on the window function $g$ implies unique recovery (up to a global phase) from $|V_gf(\mathcal{L})|$, provided that $\mathcal{L}$ is a continuous domain such as $\mathcal{L}=\R^2$ \cite[Section 4]{GrohsKoppensteinerRathmair}. In applications, however, the spectrogram is only accessible at a discrete set, most notably at samples on a lattice. In this article, we address the natural question of the uniqueness of the STFT phase retrieval problem in the situation where $\mathcal{L}$ is a lattice in the time-frequency plane. We prove that \emph{\begin{center} no window function $g \in \lt$ and no lattice $\mathcal{L} \subset \R^2$ \\ achieves unique recovery (up to a global phase) of signals in $\lt$.\end{center}}
\noindent This statement reveals a fundamental barrier in the discretization of the STFT phase retrieval problem from lattice measurements: the prior restriction to a proper subspace of $\lt$ in the discretization of the STFT phase retrieval problem from lattice samples is inevitable.

\subsection{Main results}
We now state the main results of the paper in a mathematically precise form. To that end, we introduce the equivalence relation $f \sim h$ which indicates that $f$ and $h$ equal up to a global phase, i.e.
$$
f \sim h \iff \exists \nu \in \T : f=\nu h.
$$

\begin{definition}\label{definition:uniqueness_pair}
Let $g \in \lt$ and $\mathcal{L} \subseteq \R^2$. We say that $(g,\mathcal{L})$ is a uniqueness pair of the STFT phase retrieval problem if every $f \in \lt$ is determined up to a global phase by $|V_gf(\mathcal{L})|$, i.e. the implication
$$
|V_gf(\mathcal{L})| = |V_gh(\mathcal{L})| \implies f \sim h
$$
holds true for every $f,h \in \lt$.
\end{definition}

In a more abstract language, the property of a uniqueness pair may be replaced by the demand on the forward operator $f \mapsto |V_gf(\mathcal{L})|$ being injective on the the quotient space $\lt\bign/\sim.$ Recall that a lattice is a set $\mathcal{L}=L\Z^2$ with $L \in \mathrm{GL}_2(\R)$ being an invertible matrix, the generating matrix of $\mathcal{L}$. A shifted lattice is a set $\mathcal{S} \subset \R^2$ of the form $\mathcal{S} = z + \mathcal{L}$ where $\mathcal{L}$ is a lattice and $z \in \R^2$ is a vector.
In \cite[Theorem 1 and Remark 4]{alaifari2020phase} the authors show that $(\varphi,\mathcal{L})$ is not a uniqueness pair provided that $\varphi(t)=e^{-\pi t^2}$ is a centered Gaussian and $\mathcal{L}$ is a shifted lattice. The following theorem significantly generalizes this result (see Section \ref{sec:main}).

\begin{theorem}\label{thm:mmm}
Suppose that $g \in \lt$ is an arbitrary window function and let $\mathcal{S} \subset \R^2$ be a subset of the time-frequency plane. Then $(g,\mathcal{S})$ is never a uniqueness pair, provided that $\mathcal{S}$ is a shifted lattice, i.e. $\mathcal{S} = z+ L\Z^2$ for some $L \in \mathrm{GL}_2(\R)$ and some $z \in \R^2$.
\end{theorem}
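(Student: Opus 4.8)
\emph{The plan.} I would prove the (equivalent) statement that one can always exhibit $f,h\in\lt$ with $f\not\sim h$ and $|V_gf(\lambda)|=|V_gh(\lambda)|$ for every $\lambda\in\mathcal S$. The engine is a ``disjointness along $\mathcal S$'' construction: suppose we can find two $\C$-linearly independent functions $f_1,f_2\in\lt$ whose short-time Fourier transforms do not overlap on $\mathcal S$, i.e.
\[
V_gf_1(\lambda)\cdot V_gf_2(\lambda)=0\qquad\text{for all }\lambda\in\mathcal S .
\]
Then put $f\coloneqq f_1+f_2$ and $h\coloneqq f_1-f_2$. At each $\lambda\in\mathcal S$ at least one of $V_gf_1(\lambda),V_gf_2(\lambda)$ vanishes, so $|V_gf(\lambda)|=|V_gf_1(\lambda)\pm V_gf_2(\lambda)|=|V_gh(\lambda)|$, hence $|V_gf(\mathcal S)|=|V_gh(\mathcal S)|$. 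On the other hand $f=\nu h$ with $\nu\in\T$ would give $(1-\nu)f_1+(1+\nu)f_2=0$, which for $\nu\ne\pm1$ contradicts linear independence and for $\nu=\pm1$ forces $f_1=0$ or $f_2=0$; so $f\not\sim h$, and $(g,\mathcal S)$ is not a uniqueness pair. (The degenerate case $g=0$ is trivial, so assume $g\neq0$.) Thus everything reduces to producing such a pair $f_1,f_2$.

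\emph{Producing $f_1,f_2$.} Write $\mathcal S=z+\mathcal L$ with $\mathcal L=L\Z^2$, and let $\mathcal L_0\subset\mathcal L$ be a sublattice of index two, so that $\mathcal S=\mathcal S_1\sqcup\mathcal S_2$ splits into two cosets, each a shifted lattice of density $\tfrac12|\det L|^{-1}$. Denoting by $\pi(x,\omega)\gamma$ the time–frequency shift $t\mapsto e^{2\pi i\omega t}\gamma(t-x)$, so that $V_gf(\lambda)=\langle f,\pi(\lambda)g\rangle$, the family $\{\pi(\lambda)g:\lambda\in\mathcal S_i\}$ is a Gabor system over a (shifted) lattice of that density. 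If $|\det L|^{-1}<2$ this density is $<1$, and the incompleteness direction of the density theorem for Gabor systems shows each such system fails to span $\lt$ — indeed its orthogonal complement is infinite-dimensional. Choose $0\ne f_2$ orthogonal to $\{\pi(\lambda)g:\lambda\in\mathcal S_1\}$, i.e. with $V_gf_2\equiv0$ on $\mathcal S_1$, and then $f_1$ in the (infinite-dimensional) orthogonal complement of $\{\pi(\lambda)g:\lambda\in\mathcal S_2\}$ with $f_1\notin\C f_2$; then $V_gf_1\,V_gf_2\equiv0$ on $\mathcal S_1\cup\mathcal S_2=\mathcal S$. When $g$ is real-valued these complements are closed under complex conjugation, so $f_1,f_2$, and hence $f,h$, can be taken real — this yields the announced refinement about real counterexamples. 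The remaining reductions are routine: a shift $z$ is absorbed into the window (replace $g$ by $\pi(z)g$), and the symplectic covariance $|V_{\mu_Ag}(\mu_Af)(Az)|=|V_gf(z)|$ for metaplectic $\mu_A$, $A\in\mathrm{SL}_2(\R)$, lets one pass between a general lattice and a rectangular one $\alpha\Z\times\beta\Z$.

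\emph{Main obstacle.} The construction above only settles lattices of density $<2$: for a ``good'' window (e.g.\ a Gaussian) sub-critical density is necessary for incompleteness, and splitting $\mathcal S$ into finitely many disjoint sub-critical pieces is impossible once $|\det L|^{-1}\ge2$. The hard part will therefore be the very dense, highly redundant lattices. Here I would after the metaplectic reduction restrict attention to signals $f$ supported in one frequency period $[0,\beta^{-1})$ in time: then, for each fixed $m$, the samples $\bigl(V_gf(\alpha m,\beta n)\bigr)_{n\in\Z}$ are exactly the Fourier coefficients on the circle $\R/\beta^{-1}\Z$ of $t\mapsto f(t)\overline{g(t-\alpha m)}$, so that equality of all spectrogram samples is equivalent to equality, for every $m$, of the periodized windowed autocorrelations $\sum_k\!\int f(s)\overline{f(s-t-k\beta^{-1})}\,\overline{g(s-\alpha m)}g(s-t-k\beta^{-1}-\alpha m)\,ds$. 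One then wants to build $h$ (supported in the same interval) from $f$ by a conjugation–reflection operation preserving all of these autocorrelations while breaking the global-phase relation, exploiting the classical ambiguity $|\widehat G(n)|=|\widehat{\overline{G(-\cdot)}}(n)|$ of Fourier phase retrieval on a compact group. The delicate point is to make this single operation simultaneously compatible with all translates $g(\cdot-\alpha m)$ of the window (and, if $g$ vanishes, to control where it does); a possible alternative is to bootstrap from the density-$<2$ case by passing to a coarser sublattice and approximating. Either way, feeding the resulting pair $(f,h)$ into the mechanism of the first paragraph completes the proof.
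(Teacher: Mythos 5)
Your first mechanism (two linearly independent $f_1,f_2$ with $V_gf_1\cdot V_gf_2\equiv 0$ on $\mathcal S$, then $f_1\pm f_2$) is sound, and the coset-splitting construction behind it is a legitimate way to realize it -- but, as you yourself flag, it only works when each of the two cosets has sub-critical density, i.e.\ when $|\det L|^{-1}<2$. For denser lattices the Gabor systems over the cosets are complete (for a Gaussian window, already at density $\geq 1$), the orthogonal complements you need are trivial, and the mechanism cannot be salvaged by splitting into more than two pieces, since with three or more summands the sign-flip identity $|f_1+f_2|=|f_1-f_2|$ at a point where one factor vanishes no longer has an analogue. The ``main obstacle'' paragraph is therefore the actual content of the theorem for a large range of lattices, and what you offer there is not a proof: the proposed conjugation--reflection of the periodized windowed function would have to produce a single $h$ compatible with \emph{every} translate $g(\cdot-\alpha m)$ of the window simultaneously, which the classical one-dimensional Fourier ambiguity does not give you (the reflected object depends on $m$ unless $g$ has special symmetry), and the ``bootstrap by passing to a coarser sublattice'' goes in the wrong direction -- a counterexample on a sublattice says nothing about the full, finer lattice. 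As written, the argument establishes the theorem only for $|\det L|>1/2$.

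The paper avoids the density issue entirely by proving something stronger: no set of shifted parallel lines $z+R_\theta(\R\times h\Z)$ is a uniqueness set, and every lattice is contained in such a set for suitable $\theta,h,z$ (pick a primitive vector of $L\Z^2$ for the direction of the lines). On the lines themselves the counterexample is a coefficient-conjugation symmetry inside a (fractional) shift-invariant space: for $f=\sum_k c_k T^{\theta}_{\mathfrak s k}(\mathcal Rg)$ with finitely supported $\{c_k\}$ not lying on a line through the origin in $\C$, the function $f_\times=\sum_k\overline{c_k}\,T^{\theta}_{\mathfrak s k}(\mathcal Rg)$ satisfies $|V_gf|=|V_gf_\times|$ on $R_\theta(\R\times\tfrac1{\mathfrak s}\Z)$ because the phase factors $e^{-2\pi i\mathfrak s k\omega}$ are trivial there, while $f\nsim f_\times$ by linear independence of translates. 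This is density-independent and uses only elementary identities for the STFT and the fractional Fourier transform, rather than the density theorem and the infinite-deficit results your construction would need to cite. If you want to complete your route, you would have to find a replacement for the disjoint-support mechanism that survives arbitrarily high lattice density; the parallel-lines observation is precisely such a replacement.
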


The previous statement is in stark contrast to the case where phase information is available: it is known that for every lattice $\mathcal{L}=L\Z^2, L \in \mathrm{GL}_2(\R)$, which satisfies $|\det(L)|\leq 1$ there exists a window function $g \in \lt$ such that every $f \in \lt$ is uniquely determined by $V_gf(\mathcal{L})$ \cite[Theorem 11]{Heil2007}. In addition, mild conditions on a window function $g$ imply that the Gabor system $\{ e^{2\pi i \ell \cdot } g(\cdot - k ) : (k,\ell) \in a\Z \times b \Z \}$ forms a frame for $\lt$ (which is a much stronger statement than being complete) provided that $a,b > 0$ are sufficiently small, see \cite[Section 3.5]{Heil2007} and the references therein. In fact, Theorem \ref{thm:mmm} above follows from a more general observation that shifted parallel lines do not guarantee uniqueness. We say that $\mathcal{P} \subset \R^2$ is a set of shifted parallel lines if $\mathcal{P}$ arises from a set of the form $\R \times h\Z, h>0,$ via a rotation around the origin, followed by a translation.

\begin{theorem}\label{thm:2}
Suppose that $g \in \lt$ is an arbitrary window function and let $\mathcal{P} \subset \R^2$. Then $(g,\mathcal{P})$ is never a uniqueness pair, provided that $\mathcal{P}$ is a set of shifted parallel lines in the time-frequency plane.
\end{theorem}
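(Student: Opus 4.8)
The plan is to first normalize the geometry of $\mathcal{P}$ and then, for the normalized problem, exhibit a pair of non-equivalent square-integrable signals with identical spectrogram samples.

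\emph{Step 1: reduction to horizontal lines.} A set of shifted parallel lines is $\mathcal{P}=z_0+R_\theta(\R\times h\Z)$ for some rotation $R_\theta\in\mathrm{SL}_2(\R)$, some $h>0$ and some $z_0\in\R^2$. I would invoke two covariance properties of the STFT: the metaplectic covariance $|V_{\mathcal{F}_\theta g}(\mathcal{F}_\theta f)(R_\theta z)|=|V_gf(z)|$, where $\mathcal{F}_\theta$ is the fractional Fourier transform associated with $R_\theta$, and the time‑frequency covariance $|V_g(M_{-c}f)(x,\omega)|=|V_gf(x,\omega+c)|$. Absorbing the rotation into the window ($g\rightsquigarrow\mathcal{F}_{-\theta}g$, again an arbitrary $\lt$‑window) and absorbing $z_0$ into a modulation of the signal (its component along the lines is irrelevant for $\R\times h\Z$, its component across the lines is undone by $M_{-c}$, which preserves $\sim$), the claim reduces to: \emph{for every $g\in\lt$ and every $h>0$, $(g,\R\times h\Z)$ is not a uniqueness pair.}

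\emph{Step 2: a perturbation ansatz.} I would look for the counterexample as $f_1=\psi+\phi$, $f_2=\psi-\phi$ with $\psi,\phi\in\lt$ and $\phi\notin\C\psi$ (which is exactly $f_1\not\sim f_2$). Since $V_g$ is linear, $|V_gf_1|^2-|V_gf_2|^2=4\,\re\!\big(V_g\psi\,\overline{V_g\phi}\big)$, so $(f_1,f_2)$ works precisely when
\[\re\!\big(V_g\psi(x,hk)\,\overline{V_g\phi(x,hk)}\big)=0\qquad\text{for all }x\in\R,\ k\in\Z.\]
A convenient sufficient condition is that, for every $k$, one has $V_g\phi(\cdot,hk)=i\,\lambda_k\,V_g\psi(\cdot,hk)$ on $\R$ with $\lambda_k$ real‑valued, since then the pointwise product equals $-i\lambda_k|V_g\psi(\cdot,hk)|^2\in i\R$.

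\emph{Step 3: constructing $\psi,\phi$ (the main obstacle).} On the Fourier side $\widehat{V_g\psi(\cdot,hk)}(\xi)=\widehat\psi(\xi+hk)\,\overline{\widehat g(\xi)}$ up to a reflection, so the line data of $\psi$ is a single shifted copy of $\widehat\psi$ weighted by $\overline{\widehat g}$; the full phaseless information is exactly $\{|(M_{-hk}\psi)*h_0|:k\in\Z\}$ with $h_0=\overline{g(-\,\cdot)}$. If the semi‑discrete Gabor system $\{M_{hk}T_xg:x\in\R,\,k\in\Z\}$ is incomplete — equivalently, the $h$‑periodization of $\mathbf 1_{\operatorname{supp}\widehat g}$ is not a.e.\ $\ge 1$ — pick $\chi\neq0$ in its orthogonal complement: then $V_g\chi$ vanishes on $\R\times h\Z$, and $f_1=\psi$, $f_2=\psi+\chi$ (with $\psi$ chosen so that $\chi\notin\C\psi$) already do the job. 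In the remaining case, in particular whenever $\widehat g$ vanishes only on a null set, the line data is rigidly coupled across $k$ and the phase ambiguity must be produced by hand. For $g=\varphi$ the Gaussian this is explicit: writing $V_\varphi f(x,\omega)=P(x,\omega)E(x,\omega)\,(Bf)(x-i\omega)$ with $|P|=1$, $E>0$, and taking $B\psi(w)=\cosh(\pi w/h)\,B\phi_0(w)$, $\phi=i\phi_0$ (e.g.\ $\phi_0=\varphi$, which gives $\psi=\tfrac12 e^{\pi/(2h^2)}(T_{1/h}\varphi+T_{-1/h}\varphi)$), one gets $V_\varphi\psi\,\overline{V_\varphi\phi}=-iE^2\cosh(\pi w/h)\,|B\phi_0|^2$, purely imaginary on $\omega=hk$ because $\cosh(\pi(x-ihk)/h)=(-1)^k\cosh(\pi x/h)\in\R$, and $\phi\notin\C\psi$ since $\cosh(\pi w/h)$ is nonconstant. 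For a general window no such complex‑analytic multiplier structure is available, and the hard part is to realize, by choosing $\psi,\phi$ directly on the Fourier side, the compatible family $V_g\phi(\cdot,hk)=i\lambda_k V_g\psi(\cdot,hk)$ for all $k$ simultaneously — e.g.\ by arranging that $V_g\phi_0(\cdot,hk)$ has a phase that is affine in $x$ with slope in $\pi h\Z$ (whence $\psi=T_{1/h}\phi_0+T_{-1/h}\phi_0$, $\phi=i\phi_0$ works verbatim as for the Gaussian) and using the freedom in $\widehat\psi$ and in the support of $\widehat g$ to reconcile consecutive lines. Once such $\psi,\phi$ are produced, $f_1=\psi+\phi$, $f_2=\psi-\phi$ prove Theorem~\ref{thm:2}; Theorem~\ref{thm:mmm} then follows because every shifted lattice is contained in a set of shifted parallel lines.
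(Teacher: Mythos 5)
Your Steps 1 and 2 are sound and in fact reproduce the skeleton of the paper's argument: the reduction to horizontal lines $\R\times h\Z$ via the rotation property of the fractional Fourier transform and the covariance of the STFT under time--frequency shifts is exactly how the paper passes from its Theorem~\ref{theorem:parallel_lines} to Theorem~\ref{thm:main_parallel_lines}, and your ansatz $f_{1,2}=\psi\pm\phi$ with $\re\big(V_g\psi\,\overline{V_g\phi}\big)=0$ on the lines is equivalent to the paper's pair $f,f_\times$ with conjugated coefficient sequences (take $c_{-1}=1$, $c_1=i$). (A small caveat in Step~2: $f_1\nsim f_2$ is equivalent to $\{\psi,\phi\}$ being linearly independent, not merely to $\phi\notin\C\psi$; the two differ when $\psi=0$.) The genuine gap is Step~3, and you flag it yourself: for a general window you only settle the degenerate case where the semi-discrete Gabor system is incomplete, plus the Gaussian case via the Bargmann transform; the proposed way of ``reconciling consecutive lines'' by exploiting freedom in $\supp\widehat g$ is not available, since $g$ is prescribed, and the compatible family $V_g\phi(\cdot,hk)=i\lambda_k V_g\psi(\cdot,hk)$ is never actually produced. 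As written, the proposal does not prove the theorem for an arbitrary $g\in\lt$.

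The missing idea is a self-referential choice: take $\psi$ and $\phi$ to be translates of the \emph{reflected window} $\mathcal{R}g$ by multiples of $1/h$. The substitution $t\mapsto x-t$ in the defining integral gives, for every $g\in\lt$, the symmetry
$$
\overline{V_g(\mathcal{R}g)(x,\omega)}=e^{2\pi i x\omega}\,V_g(\mathcal{R}g)(x,\omega),
\qquad\text{i.e.}\qquad
V_g(\mathcal{R}g)(x,\omega)=e^{-\pi i x\omega}\rho(x,\omega)\ \text{with}\ \rho\ \text{real-valued},
$$
which is precisely the ``phase affine in $x$'' structure you could only verify for the Gaussian, now valid unconditionally. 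With $\psi=T_{-1/h}\mathcal{R}g$ and $\phi=iT_{1/h}\mathcal{R}g$, the covariance property yields
$$
V_g\psi(x,\omega)\,\overline{V_g\phi(x,\omega)}
=-i\,e^{2\pi i\omega/h}\,\rho\!\left(x+\tfrac1h,\omega\right)\rho\!\left(x-\tfrac1h,\omega\right),
$$
which is purely imaginary for all $x$ whenever $\omega\in h\Z$; and $\{\psi,\phi\}$ is linearly independent because distinct translates of a nonzero $L^2$-function always are (\cite[Proposition 9.6.2]{christensenBook}). This closes Step~3 for every window and every $h>0$; the paper packages the same mechanism as the statement that $f=\sum_k c_kT_{k/h}(\mathcal{R}g)$ and $f_\times=\sum_k\overline{c_k}T_{k/h}(\mathcal{R}g)$ have identical spectrograms on $\R\times h\Z$, with $f\nsim f_\times$ whenever the $c_k$ do not all lie on a line through the origin.
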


\begin{figure}[b]
\centering
   \includegraphics[width=12.5cm]{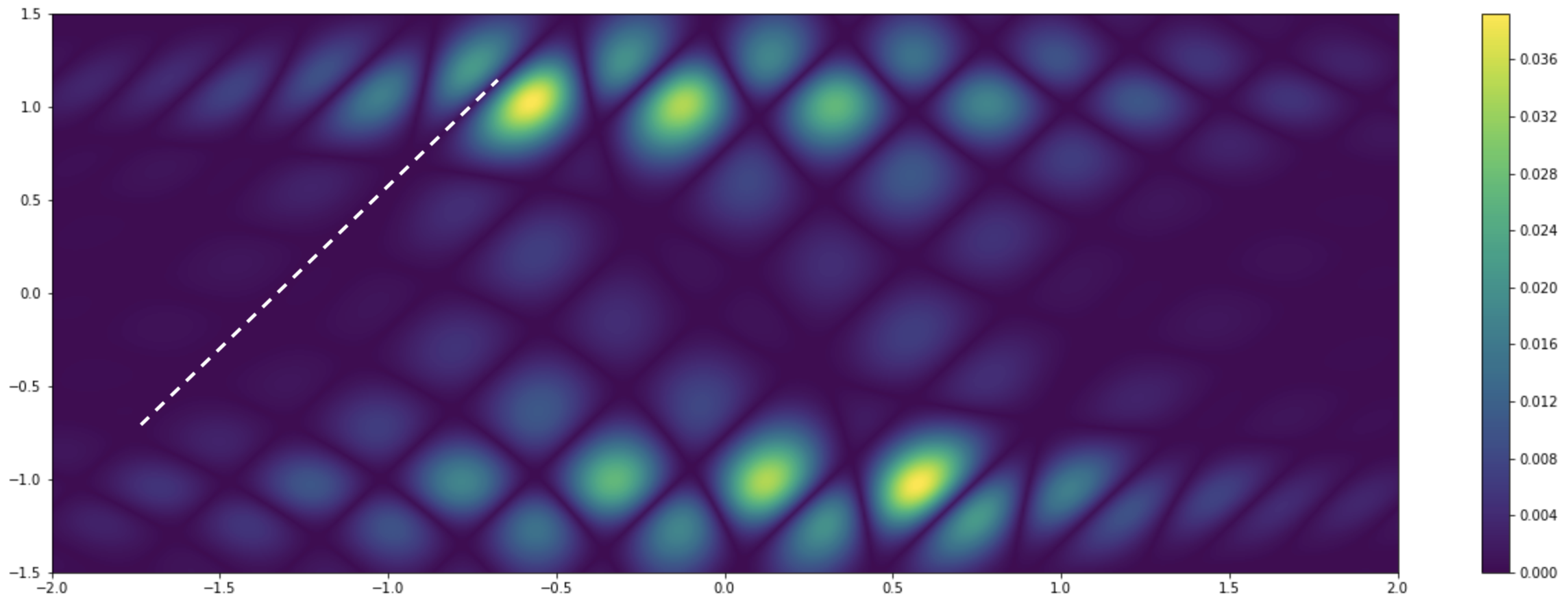}
\caption{Plot of the pointwise distance $Q$ of the square of two spectrograms, $Q \coloneqq ||V_gf_1|^2-|V_gf_2|^2|$, on the rectangle $[-2,2]\times[-1.5,1.5]$ with window function $g(t)=e^{-|t|}$. The functions $f_1$ and $f_2$ are chosen in such a way that $Q$ vanishes on parallel lines in the time-frequency plane, one of which is highlighted by the white dotted line. Such functions $f_1$ and $f_2$ can be obtained for every window function $g$ and they have the additional property that they do not agree up to a global phase. Their precise construction is the content of Section \ref{sec:main}.}
\label{fig:intro_plot}
\end{figure}

The conclusion of Theorem \ref{thm:2} is visualized in Figure \ref{fig:intro_plot}.
Prominent choices of window functions in Gabor analysis and its applications have the property that they are real-valued (Gaussians, Hermite functions, Airy disc functions, Hanning windows, rectangular windows). One could inquire about the question of whether a real-valuedness assumption on the underlying signal space achieves uniqueness from lattice samples, i.e. one specializes the considered input functions to belong to the space
$$
L^2(\R,\R) = \left \{ f \in \lt : f \ \text{is real-valued} \right \}.
$$
Under mild conditions on a lattice $\mathcal{L}$, the STFT phase retrieval problem still fails to be unique.

\begin{theorem}
Let $g \in L^2(\R,\R)$ be a real-valued window function and $\mathcal{L}=L\Z^2$ a lattice generated by 
\begin{equation*}
    L = \begin{pmatrix}
a & b\\
c & d
\end{pmatrix} \in \mathrm{GL}_2(\R) .
\end{equation*}
If $a$ and $b$ as well as $c$ and $d$ are linearly dependent over $\Q$ then there exist two real-valued functions $f_1,f_2 \in L^2(\R,\R)$ which do not agree up to a global phase, $f_1 \nsim f_1$, but their spectrograms agree on $\mathcal{L}$,
$$
|V_gf_1(\mathcal{L})| = |V_gf_2(\mathcal{L})|.
$$
\end{theorem}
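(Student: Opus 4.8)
The plan is to reduce the statement to Theorem~\ref{thm:2} applied to parallel lines in the axis‑parallel orientation, and then to upgrade the construction behind that theorem so that it produces real‑valued functions.

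\emph{Step 1 (reducing the lattice).} The hypothesis that $a$ and $b$ are linearly dependent over $\Q$ says exactly that the subgroup $a\Z+b\Z$ of $\R$ is discrete; since $L$ is invertible, $a$ and $b$ do not both vanish, so $a\Z+b\Z=\gamma\Z$ for some $\gamma>0$, and similarly $c\Z+d\Z=\delta\Z$ for some $\delta>0$. Hence every point $(am+bn,cm+dn)$ of $\mathcal L=L\Z^2$ has first coordinate in $\gamma\Z$ and second coordinate in $\delta\Z$, so
\[
\mathcal L\ \subseteq\ \gamma\Z\times\delta\Z\ \subseteq\ \R\times\delta\Z .
\]
The set $\R\times\delta\Z$ is a set of shifted parallel lines in the canonical orientation (trivial rotation and translation), hence is covered by Theorem~\ref{thm:2}; moreover two functions with equal spectrograms on $\R\times\delta\Z$ automatically have equal spectrograms on the subset $\mathcal L$. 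It therefore suffices to produce $f_1,f_2\in L^2(\R,\R)$ with $f_1\nsim f_2$ and $|V_gf_1(x,\omega)|=|V_gf_2(x,\omega)|$ for all $(x,\omega)\in\R\times\delta\Z$.

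\emph{Step 2 (reformulating, and invoking Theorem~\ref{thm:2}).} Using the Fourier‑side identity $V_gf(x,\omega)=e^{-2\pi i x\omega}\,\ift\!\big(\widehat f\cdot\overline{\widehat g(\cdot-\omega)}\big)(x)$ together with the fact that $|\ift(w)|^2$ and the autocorrelation $w\star w$ form a Fourier transform pair, the function $x\mapsto|V_gf(x,\delta n)|$ (with $n$ fixed and $x$ ranging over $\R$) is equivalent data to the autocorrelation $R_n(f):=\big(\widehat f\cdot\overline{\widehat g(\cdot-\delta n)}\big)\star\big(\widehat f\cdot\overline{\widehat g(\cdot-\delta n)}\big)$. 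Thus the goal is two profiles $\widehat f_1,\widehat f_2\in\lt$ which are not scalar multiples of one another but satisfy $R_n(f_1)=R_n(f_2)$ for every $n\in\Z$. By Theorem~\ref{thm:2}, applied to $\mathcal P=\R\times\delta\Z$, such a pair exists in $\lt$; the only remaining point is to arrange that $f_1$ and $f_2$ can be taken real‑valued.

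\emph{Step 3 (making the witnesses real).} A function $f$ lies in $L^2(\R,\R)$ exactly when $\widehat f$ is Hermitian‑symmetric, i.e. $\widehat f(-\xi)=\overline{\widehat f(\xi)}$. Both the window $g$ and the constraint set $\R\times\delta\Z$ are invariant under $\omega\mapsto-\omega$, which on the Fourier side is the reflection $\xi\mapsto-\xi$ — precisely the involution that intertwines with this Hermitian symmetry (for real $g$ one also has $\overline{\widehat g(\cdot-\delta n)}=\widehat g(\cdot-\delta n)$). Consequently the functionals $\widehat f\mapsto R_n(f)$ are compatible with the symmetry, and I would revisit the construction of Theorem~\ref{thm:2} choosing a Hermitian‑symmetric profile $\widehat f_1$ (so $f_1\in L^2(\R,\R)$) and performing the perturbation that yields the spectrogram‑equivalent, phase‑inequivalent partner $\widehat f_2$ symmetrically with respect to $\xi\mapsto-\xi$. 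Then $\widehat f_2$ is again Hermitian‑symmetric, so $f_2\in L^2(\R,\R)$, while $f_2\nsim f_1$ and $R_n(f_1)=R_n(f_2)$ for all $n$; hence $|V_gf_1|=|V_gf_2|$ on $\R\times\delta\Z$, and a fortiori on $\mathcal L$, which is what we wanted.

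\emph{The main obstacle.} Everything except Step~3 is routine. The delicate part is ensuring that the perturbation turning $f_1$ into a spectrogram‑equivalent but phase‑inequivalent partner along all the lines $\omega=\delta n$ can be carried out so as to commute with complex conjugation. This is exactly where the hypothesis on $L$ is needed: it forces $\mathcal L$ into a set of parallel lines in the axis‑parallel orientation, which is the orientation for which the $\omega$‑reflection symmetry of spectrograms of real signals with respect to a real window is available; for a generic oblique line direction the reflection exploited in Step~3 would not be compatible with the real structure of $\lt$, and the real‑valued refinement would fail.
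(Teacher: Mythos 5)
Your Step 1 is correct and coincides with the paper's reduction of a lattice with $\Q$-dependent rows to a sublattice of a rectangular one, and Step 2 correctly isolates where the work lies. But Step 3 --- the only non-routine part --- is a declaration that a ``symmetric perturbation'' exists rather than a construction, and for the orientation you chose it runs into a concrete obstruction. The witnesses that Theorem \ref{theorem:parallel_lines} produces for the horizontal lines $\R\times\delta\Z$ (order $\theta=0$, $\s=1/\delta$) are $f=\sum_k c_k T_{\s k}\mathcal{R}g$ and $f_\times=\sum_k \overline{c_k}\,T_{\s k}\mathcal{R}g$. If $g$ is real, then $f$ is real iff $\sum_k (\im c_k)\,T_{\s k}\mathcal{R}g=0$, and since finitely many translates of the nonzero function $\mathcal{R}g$ are linearly independent in $\lt$, this forces $c_k\in\R$ for all $k$; but then $\{c_k\}\notin\ell^2_\mathcal{O}(\Z)$ and $f_\times=f$, so the counterexample collapses precisely when you impose reality. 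The $\omega\mapsto-\omega$ invariance of $\R\times\delta\Z$ that you invoke cannot repair this: you never exhibit the perturbation, nor verify that the perturbed $\widehat{f_2}$ remains Hermitian while $f_1\nsim f_2$ and the autocorrelations $R_n$ are preserved, and the degenerate outcome just described is itself perfectly symmetric, so symmetry alone decides nothing.

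The missing idea is to use the \emph{other} family of lines your Step 1 already provides, namely $\gamma\Z\times\R$ coming from the $\Q$-dependence of $a$ and $b$; this corresponds to $\theta=\pi/2$, where the building blocks are modulations rather than translations: $f=\sum_k c_k M_{\s k}\mathcal{R}g=\mathcal{R}g\cdot P$ with $P(t)=\sum_k c_k e^{2\pi i \s k t}$. For real $g$, reality of $f$ and of $f_\times$ is now equivalent to reality of the trigonometric polynomials, i.e.\ to the Hermitian condition $c_{-k}=\overline{c_k}$ --- and this \emph{is} compatible with $\{c_k\}\in\ell^2_\mathcal{O}(\Z)$ (take $c_1=1+i$, $c_{-1}=1-i$, all other $c_k=0$). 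With such a sequence, Theorem \ref{theorem:parallel_lines} gives equality of the spectrograms on $R_{\pi/2}(\R\times\tfrac1\s\Z)=\tfrac1\s\Z\times\R\supseteq\gamma\Z\times\R\supseteq\mathcal{L}$ for $\s=1/\gamma$, Theorem \ref{theorem:non_uniqueness} gives $f\nsim f_\times$, and Proposition \ref{prop:r_v} (applied as in the paper's proof, via $u=\ft\mathcal{R}g$, whose Fourier transform is $g$ and hence real) gives reality of both functions. So the gap is genuine but localized: replace ``symmetrize the horizontal-lines construction'' by ``switch to vertical lines and Hermitian coefficient sequences in $\ell^2_\mathcal{O}(\Z)\cap c_{00}(\Z)$''.
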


In particular, the conclusion of the previous theorem holds true if the generating matrix $L$ has rational entries, $L \in \mathrm{GL}_2(\Q),$ or if the lattice is rectangular, i.e. $\mathcal{L}=\alpha \Z \times \beta \Z$ for some $\alpha,\beta \in \R \setminus \{ 0\}$.

\subsection{Outline}
We quickly outline the structure of the paper. In Section \ref{sec:preliminaries} we define frequently used operators and their interrelations. Besides, we present the necessary background on the fractional Fourier transform and shift-invariant spaces. Based on this discussion we proceed in proving the main results of the article in Section \ref{sec:main}. We conclude the article with Section \ref{sec:conclusion}, where we compare our results to previous work and outline a list of open problems suggested by the reported results.

\section{Preliminaries on fractional Fourier transform and associated shift-invariant spaces}\label{sec:preliminaries}

In this section, we collect several preliminary definitions and results about operators arising in time-frequency analysis, the fractional Fourier transform, and shift-invariant spaces.

\subsection{Basic operations}\label{sec:basic_operations}

The short-time Fourier transform of a function $f \in \lt$ with respect to a window function $g \in \lt$ is the map
$$
V_gf(x,\omega) = \int_\R f(t)\overline{g(t-x)}e^{-2\pi i \omega t} \, dt,
$$
which is defined on the time-frequency plane $\R^2$. This is a uniformly continuous map which measures the contribution to $f(t)$ of the frequency $\omega$ near $t=x$. In particular, it is pointwise defined. Hence, the STFT phase retrieval problem from lattice samples is well-defined.
The so-called spectrogram of $f$ with respect to $g$ is the modulus of the STFT, i.e. the map
$$
\R^2 \to \R_{\geq 0}, \ \ (x,\omega) \mapsto |V_gf(x,\omega)|.
$$
Note that $V_gf(x,\omega)$ is the Fourier transform of $f\overline{g(\cdot -x)}$ evaluated at $\omega$. The Fourier transform of a map $u \in L^1(\R)$ is given by
$$
\ft u(\omega) \coloneqq \hat u(\omega) \coloneqq \int_\R u(t)e^{-2 \pi i \omega t} \, dt.
$$
Recall that $\ft u$ is an element of $C_0(\R)$, the Banach space of all continuous functions on the real line vanishing at infinity. The inverse Fourier transform is the map $\ift u(t)=\ft u(-t)$. The operator $\ft$ extends from $L^1(\R) \cap \lt$ to a unitary operator mapping $\lt$ bijectively onto $\lt$. In addition to the Fourier operator, we introduce some key-operations in time-frequency analysis. For $\tau,\nu \in \R$, the translations operator $T_\tau : \lt \to \lt$, the modulation operator $M_\nu : \lt \to \lt$ and the reflection operator $\mathcal{R} : \lt \to \lt$ are defined via
\begin{equation*}
    \begin{split}
        T_\tau f(t) &= f(t-\tau), \\
        M_\nu f(t) &= e^{2\pi i \nu t} f(t), \\
        \mathcal{R}f(t) &= f(-t).
    \end{split}
\end{equation*}
Using translation and modulation, the short-time Fourier transform of $f$ with respect to $g$ simplifies to the expression $V_gf(x,\omega) = \langle f, M_\omega T_x g \rangle$ where $\langle a,b \rangle = \int_\R a(t)\overline{b(t)} \, dt$ denotes the $L^2$-inner product of two functions $a,b \in \lt$. The next Lemma summarizes elementary relations between the operators $\ft, V_g, T_\tau, M_\nu$ and $\mathcal{R}$ which will be used throughout the article. The simple proofs can be found, for instance, in \cite[Chapter 1--3]{Groechenig}.

\begin{lemma}\label{lemma:comm_relations}
For every $\tau,\nu,x,\omega \in \R$ and every $f,g \in \lt$, the operators defined above satisfy the relations
\begin{enumerate}
    \item $T_\tau M_\nu = e^{-2\pi i \tau \nu} M_\nu T_\tau$
    \item $\ft T_\tau = M_{-\tau} \ft$
    \item $\ift T_\tau = M_\tau \ift$
    \item $T_\tau \mathcal{R} = \mathcal{R} T_{-\tau}$
    \item $\overline{\ft f} = \mathcal{R} \ft \overline{f}$.
\end{enumerate}
Moreover, the STFT satisfies the covariance property
\begin{enumerate}
    \item[(6)] $V_g(T_\tau M_\nu f)(x,\omega) = e^{-2\pi i \tau \omega} V_gf(x-\tau, \omega -\nu)$.
\end{enumerate}
\end{lemma}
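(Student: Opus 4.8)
The plan is to verify each of the six relations by direct substitution of the operator definitions, and then to derive the covariance property (6) from identity (1) rather than recomputing it from scratch. Every statement is a pointwise identity between functions (or, for (6), between values of the STFT), so in each case it suffices to evaluate both sides at an arbitrary argument and compare.

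For (1) I would apply both sides to an arbitrary $f \in \lt$ and evaluate at $t \in \R$: the left side gives $(T_\tau M_\nu f)(t) = e^{2\pi i \nu(t-\tau)} f(t-\tau)$, while $e^{-2\pi i \tau \nu}(M_\nu T_\tau f)(t) = e^{-2\pi i \tau \nu} e^{2\pi i \nu t} f(t-\tau)$; the extra phase $e^{-2\pi i \nu \tau}$ produced by modulating before translating accounts exactly for the claimed factor. For (2) and (3) the mechanism is a change of variables under the Fourier integral: substituting $s = t-\tau$ in $\ft(T_\tau f)(\omega) = \int_\R f(t-\tau)e^{-2\pi i \omega t}\,dt$ pulls out the factor $e^{-2\pi i \tau \omega}$, which is precisely $M_{-\tau}$ acting in the frequency variable, giving (2); identity (3) then follows either by the same substitution using $\ift u(t) = \ft u(-t)$, or simply by combining (2) with the reflection, since $\ift(T_\tau f)(t) = \ft(T_\tau f)(-t)$. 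Identity (4) is pure sign bookkeeping on the argument: both $(T_\tau \mathcal{R} f)(t)$ and $(\mathcal{R} T_{-\tau} f)(t)$ reduce to $f(\tau - t)$. For (5) I would conjugate the defining Fourier integral, using that conjugation turns $e^{-2\pi i \omega t}$ into $e^{2\pi i \omega t}$, and recognize the result as $\ft\overline{f}$ evaluated at $-\omega$, that is, $\mathcal{R}\ft\overline{f}$.

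The one step that involves genuine bookkeeping is the covariance property (6). Instead of a direct change of variables, I would exploit the inner-product form $V_gf(x,\omega) = \langle f, M_\omega T_x g\rangle$ together with the unitarity of translation and modulation on $\lt$, so that $T_\tau^\ast = T_{-\tau}$ and $M_\nu^\ast = M_{-\nu}$. Moving these two operators off $f$ yields $V_g(T_\tau M_\nu f)(x,\omega) = \langle f, M_{-\nu}T_{-\tau}M_\omega T_x g\rangle$, and a single application of (1) in the form $T_{-\tau} M_\omega = e^{2\pi i \tau \omega} M_\omega T_{-\tau}$ lets me collapse the operator string to $e^{2\pi i \tau \omega} M_{\omega-\nu} T_{x-\tau} g$. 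Pulling the scalar out of the second slot of the inner product conjugates it, producing the asserted prefactor $e^{-2\pi i \tau \omega}$ together with the shifted arguments $(x-\tau,\omega-\nu)$.

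I expect no real obstacle here: every claim is an elementary identity and the whole argument is a chain of substitutions. The only points demanding care are (i) the placement of the phase in (1), which must track whether translation or modulation acts first, and (ii) the conjugation occurring in the second argument of the inner product in (6), since the pairing $\langle\,\cdot\,,\,\cdot\,\rangle$ is antilinear in its second slot and so extracting a complex scalar introduces its conjugate. Getting the sign of the exponent right in these two places is essentially the entirety of the work.
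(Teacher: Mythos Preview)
Your proposal is correct: each identity is verified by direct substitution exactly as you outline, and your derivation of (6) via the inner-product form together with (1) and the adjoint relations $T_\tau^\ast=T_{-\tau}$, $M_\nu^\ast=M_{-\nu}$ is the standard argument. The paper itself does not give a proof but simply refers to Gr\"ochenig's textbook, where the computations are carried out in precisely this elementary fashion, so your approach coincides with the intended one.
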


\subsection{Fractional Fourier transform}\label{sec:fracft}

Denote by $\{ H_n \}_{n \in \N_0}$ the Hermite basis functions on $\R$, defined by
$$
H_n(t) = \frac{2^{1/4}}{\sqrt{n!}} \left( -\frac{1}{\sqrt{2\pi}} \right)^n e^{\pi t^2} \left( \frac{d}{dt} \right)^n e^{-2\pi t^2}.
$$
The system $\{ H_n \}_{n \in \N_0}$ constitutes and orthonormal basis for $\lt$.
For $\theta \in \R$, the unitary operator $\ft_\theta : \lt \to \lt$,
$$
\ft_\theta f = \sum_{n=0}^\infty e^{-i\theta n} \langle f,H_n \rangle H_n,
$$
is called the fractional Fourier transform of order $\theta$. A detailed investigation of this transform and its appearance in quantum mechanics can be found in the early paper by Namias \cite{Namias}. Besides, Namias' article provides an alternative and frequently used definition of the operator $\ft_\theta$ as an integral transform invoking chirp modulations \cite[Section 3]{Namias}. The following properties of the fractional Fourier transform are used throughout the present article \cite[Section 4]{Namias}.

\begin{lemma}\label{lemma:fracft_properties}
For every $\theta \in \R$ and every $\xi \in \R$ the fractional Fourier transform $\ft_\theta$ has the following properties:
\begin{enumerate}
    \item $\ft_0 = \mathrm{Id}, \ft_\frac{\pi}{2} = \ft, \ft_{-\frac{\pi}{2}} = \ft^{-1}$ and $\ft_\pi = \mathcal{R}$
    \item $\ft_{\theta+\xi} = \ft_\theta \ft_\xi$
    \item $\ft_\theta$ commutes with $\mathcal{R}$
    \item If $f \in \lt$ then $\overline{\ft_\theta f} = \ft_{-\theta} \overline{f}$
\end{enumerate}
\end{lemma}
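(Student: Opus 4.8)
\medskip

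The plan is to read off all four identities directly from the spectral definition $\ft_\theta f = \sum_{n=0}^\infty e^{-i\theta n}\langle f, H_n\rangle H_n$, leaning on three classical properties of the Hermite basis that I would either cite or verify once at the outset: each $H_n$ is real-valued, has definite parity $H_n(-t)=(-1)^n H_n(t)$, and is an eigenfunction of the Fourier transform, $\ft H_n = e^{-i\pi n/2}H_n$. Since $\{H_n\}_{n\in\N_0}$ is orthonormal, the $n$-th coefficient of $\ft_\xi f$ is $\langle \ft_\xi f, H_n\rangle = e^{-i\xi n}\langle f, H_n\rangle$; in particular $\ft_\theta$ acts on the basis by $\ft_\theta H_n = e^{-i\theta n}H_n$, and because $\ft_\theta$ is unitary it commutes with $L^2$-convergent sums, which is what lets me manipulate the defining series term by term throughout.

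For property (1) I would evaluate the series at the four special angles. At $\theta=0$ every factor $e^{-i\theta n}$ equals $1$, so $\ft_0 f = \sum_n \langle f, H_n\rangle H_n = f$ by completeness, giving $\ft_0=\mathrm{Id}$. At $\theta=\pi/2$ the factor $e^{-i\pi n/2}$ coincides with the Fourier eigenvalue, so $\ft_{\pi/2} f = \sum_n \langle f, H_n\rangle \ft H_n = \ft f$; the case $\theta=-\pi/2$ is identical using $\ft^{-1}H_n = e^{i\pi n/2}H_n$. At $\theta=\pi$ the factor is $e^{-i\pi n}=(-1)^n$, which by the parity relation equals the eigenvalue of $\mathcal{R}$ on $H_n$, so $\ft_\pi = \mathcal{R}$. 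Property (2) is the semigroup law and follows by composing the basis actions: $\ft_\theta \ft_\xi f = \sum_n e^{-i\xi n}\langle f, H_n\rangle \ft_\theta H_n = \sum_n e^{-i(\theta+\xi)n}\langle f, H_n\rangle H_n = \ft_{\theta+\xi}f$.

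Property (3) is then immediate from (1) and (2): since $\mathcal{R}=\ft_\pi$ and the order parameter adds, $\ft_\theta\mathcal{R} = \ft_{\theta+\pi} = \ft_{\pi+\theta} = \mathcal{R}\ft_\theta$. For property (4) I would conjugate the series and use that the $H_n$ are real: from $\overline{\langle f, H_n\rangle} = \langle \overline{f}, H_n\rangle$ (again because $H_n=\overline{H_n}$) one obtains $\overline{\ft_\theta f} = \sum_n e^{i\theta n}\langle \overline{f}, H_n\rangle H_n = \ft_{-\theta}\overline{f}$. The only genuine obstacle is the bookkeeping underlying the first paragraph — namely the classical verification that the Hermite functions are Fourier eigenfunctions of eigenvalue $e^{-i\pi n/2}$ for the normalization used here, together with the interchange of the bounded operators $\ft$, $\ft^{-1}$, $\mathcal{R}$ (and of complex conjugation, an antilinear isometry) with the infinite sums; once these standard facts are in place, each identity reduces to matching the scalar $e^{-i\theta n}$ against the appropriate eigenvalue, and no further estimates are required.
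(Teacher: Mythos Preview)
Your argument is correct. The paper does not actually supply its own proof of this lemma; it simply records the four properties and refers the reader to Namias' article for the details. Your direct verification from the spectral definition $\ft_\theta f = \sum_n e^{-i\theta n}\langle f,H_n\rangle H_n$, combined with the three standard facts about the Hermite basis (real-valuedness, parity $(-1)^n$, Fourier eigenvalue $e^{-i\pi n/2}$), is exactly the natural way to establish these identities and is essentially what one finds in the cited reference. The only point worth double-checking is that the Fourier eigenvalue convention $\ft H_n = (-i)^n H_n$ matches the particular normalizations of $\ft$ and $H_n$ adopted in the paper, but this is routine and your acknowledgement of it as ``bookkeeping'' is appropriate.
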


In the context of time-frequency analysis, the perhaps most significant property of the operator $\ft_\theta$ is its relation to rotations in the time-frequency plane \cite{Jaming,Lohmann93}. To present the precise mathematical formulation of this fact, we define for $\theta \in \R$ the rotation matrix $R_\theta \in \R^{2 \times 2}$ via
\begin{equation}\label{def:rotation_matrix}
    R_\theta = \begin{pmatrix}
\cos \theta & -\sin \theta\\
\sin\theta & \cos\theta
\end{pmatrix}.
\end{equation}
The rotation property of the fractional Fourier transform reads as follows.

\begin{theorem}\label{thm:frac_rotation}
If $R_\theta$ denotes the rotation matrix as given in equation \eqref{def:rotation_matrix} then for every $f,g\in \lt$ and every $(x,\omega) \in \R^2$ one has
    $$
    V_gf(R_\theta(x,\omega)) = e^{-\pi i q(\theta)} e^{\pi i x \omega} V_{\ft_\theta g}(\ft_\theta f)(x,\omega)
    $$
    with $q(\theta) = (x\cos \theta - \omega \sin \theta)(x\sin \theta + \omega \cos \theta)$.
\end{theorem}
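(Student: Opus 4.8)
The plan is to reduce the identity to a scalar statement about Hermite functions and then to exploit the rotational symmetry of the cross-ambiguity function. First I would note that, for fixed $\theta$ and fixed $(x,\omega)$, both sides are bounded sesquilinear forms in the pair $(f,g)$: the STFT is linear in $f$ and conjugate-linear in $g$, the operator $\ft_\theta$ is linear and unitary (so the right-hand side is again linear in $f$ and conjugate-linear in $g$), and the prefactor $e^{-\pi i q(\theta)}e^{\pi i x\omega}$ is independent of $f,g$; by Cauchy--Schwarz and unitarity of $\ft_\theta$ each side is bounded by $\|f\|_2\|g\|_2$. Since a bounded sesquilinear form that vanishes on all pairs $(H_m,H_n)$ vanishes identically (fix the window $H_n$: the resulting bounded linear functional in $f$ vanishes on the orthonormal basis $\{H_m\}$, hence is zero; then repeat in the window variable), it suffices to prove the identity for $f=H_m$ and $g=H_n$. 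Invoking the defining eigenrelation $\ft_\theta H_k=e^{-i\theta k}H_k$, the right-hand side collapses to $e^{-\pi i q(\theta)}e^{\pi i x\omega}e^{i\theta(n-m)}V_{H_n}H_m(x,\omega)$, so the theorem reduces to the scalar identity
\[ V_{H_n}H_m(R_\theta(x,\omega)) = e^{-\pi i q(\theta)}\,e^{\pi i x\omega}\,e^{i\theta(n-m)}\,V_{H_n}H_m(x,\omega), \qquad m,n\in\N_0. \]

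Next I would pass to the cross-ambiguity function $A_{f,g}(x,\omega):=e^{\pi i x\omega}V_gf(x,\omega)$, which differs from the STFT only by the chirp $e^{\pi i x\omega}$ but, unlike the STFT, transforms cleanly under rotations. Rewriting $V_{H_n}H_m(R_\theta(x,\omega)) = e^{-\pi i (R_\theta(x,\omega))_1(R_\theta(x,\omega))_2}A_{H_m,H_n}(R_\theta(x,\omega))$ and using the elementary trigonometric identity $(R_\theta(x,\omega))_1(R_\theta(x,\omega))_2=q(\theta)$, which is precisely the definition of $q(\theta)$ in the statement, the scalar identity above becomes equivalent to the rotation covariance
\[ A_{H_m,H_n}(R_\theta(x,\omega)) = e^{i\theta(n-m)}\,A_{H_m,H_n}(x,\omega). \]
As a consistency check, for $m=n=0$ one computes $A_{H_0,H_0}(x,\omega)=e^{-\pi(x^2+\omega^2)/2}$, which is radial and hence manifestly invariant under $R_\theta$, matching the factor $e^{0}=1$.

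The remaining covariance is where the real work sits, and I would deduce it from the classical closed form of the cross-ambiguity of Hermite functions. Introducing the complex coordinate $z=\sqrt{\pi}(\omega+ix)$, a short computation shows that $R_\theta$ acts on it by $z\mapsto e^{-i\theta}z$, while $|z|^2=\pi(x^2+\omega^2)$ is rotation invariant. In this coordinate the cross-ambiguity factors, for $m\ge n$, as a constant multiple of $z^{\,m-n}L_n^{(m-n)}(|z|^2)e^{-|z|^2/2}$, with the conjugate expression for $m<n$; the entire angular dependence is carried by the monomial $z^{\,m-n}$, and the radial factors are functions of $|z|^2$ alone. Hence $R_\theta$ multiplies $A_{H_m,H_n}$ exactly by $(e^{-i\theta})^{m-n}=e^{i\theta(n-m)}$, which is the claimed covariance; substituting back through the two reductions then yields the theorem for all $f,g\in\lt$. (Alternatively, the covariance can be obtained without the Laguerre formula by first establishing it for the Gaussian $H_0$ and then recovering the general case from a generating-function identity in two auxiliary variables, reading off the coefficient of each monomial.)

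The main obstacle is the bookkeeping of unimodular phases. One must commit to a single convention for $A_{f,g}$, for the orientation of the complex coordinate $z$, and for the normalization and branch choices in the Hermite--Laguerre formula, so that the angular eigenvalue emerges as $e^{i\theta(n-m)}$ rather than its conjugate and the residual chirp assembles to exactly $e^{-\pi i q(\theta)}e^{\pi i x\omega}$ with no spurious $\theta$-dependent global phase. The trigonometric identity $(R_\theta(x,\omega))_1(R_\theta(x,\omega))_2=q(\theta)$ and the action $z\mapsto e^{-i\theta}z$ are routine; the care lies entirely in keeping all these conventions mutually consistent, which is why I would anchor every sign against the explicit $m=n=0$ Gaussian before turning to general $m,n$.
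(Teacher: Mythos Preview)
The paper does not actually prove this theorem: immediately after the statement it writes ``A proof of Theorem \ref{thm:frac_rotation} can be found in \cite{Almeida}'' and moves on. So there is no in-paper argument to compare against.

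Your approach is correct and gives a self-contained proof. The reduction to Hermite pairs via bounded sesquilinearity is clean; the passage from $V_{H_n}H_m$ to the cross-ambiguity $A_{H_m,H_n}$ together with the identification $(R_\theta(x,\omega))_1(R_\theta(x,\omega))_2=q(\theta)$ is exactly what strips away the chirp factors; and the rotation covariance $A_{H_m,H_n}(R_\theta\,\cdot\,)=e^{i\theta(n-m)}A_{H_m,H_n}$ follows from the classical Hermite--Laguerre formula once one writes $z=\sqrt{\pi}(\omega+ix)$ and observes that $R_\theta$ acts as $z\mapsto e^{-i\theta}z$. Your Gaussian sanity check $A_{H_0,H_0}(x,\omega)=e^{-\pi(x^2+\omega^2)/2}$ is correct and does anchor the signs.

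By contrast, the route in the cited reference (Almeida) works directly with the integral-kernel representation of $\ft_\theta$ (chirp--Fourier--chirp) and manipulates the STFT integral, rather than going through the spectral definition and the Hermite basis. Your method has the advantage of being purely algebraic once the Laguerre identity is granted, and it explains \emph{why} the fractional Fourier transform rotates the time-frequency plane (it diagonalizes in a basis whose ambiguity functions are eigenfunctions of rotation); the kernel approach is more computational but avoids importing the Hermite--Laguerre formula as a black box. Either way the bookkeeping of phases you flag is the only delicate point, and you have already identified the right anchor for it.
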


A proof of Theorem \ref{thm:frac_rotation} can be found in \cite{Almeida}. Moreover, the reader may consult \cite[Section 3.4]{Jaming} for a discussion of the rotation property. The phase retrievability of functions $f \in \lt$ from measurements of the form $\{ |\ft_\theta f| : \theta \in \Theta \}$, with $\Theta \subset \R$ constituting a set of orders, was studied by Jaming \cite{Jaming} and Carmeli et al. \cite{CARMELI}.

\subsection{Shift-invariant spaces}\label{sec:si_spaces}

In this section, we shall discuss shift-invariant spaces, both in the classical sense as well as generalizations in the context of fractional Fourier transform. We start by introducing classical shift-invariant spaces. Consider a generating function $u \in \lt$ and a constant $\s>0$. The subspace $\mathcal{V}_\s(u) \subset \lt$, defined as the $L^2$-closure of the $\s\Z$-shifts of $u$,
\begin{equation}\label{definition:calssical_si_space}
    \mathcal{V}_\mathfrak{s}(u) \coloneqq \overline{\lspan \left \{ T_{\s k}u : k \in \Z \right \}},
\end{equation}
is called the (principle) shift-invariant space generated by $u$ and step-size $\s>0$. Shift-invariance means that whenever $h \in \mathcal{V}_\s(u)$, so is $T_{\s n} h$ for every $n \in \Z$. A characterization of functions belonging to $\mathcal{V}_\s(u)$ was given in \cite{DEBOOR199437}. 
If $h \in \mathcal{V}_\s(u)$ is such that $h$ has a representation of the form $h = \sum_{k \in \Z} c_k T_{\s k} u$ for some sequence $\{ c_k \} \subset \C$ then we call $\{ c_k \}$ a defining sequence of $h$.
If $\{ c_k \}$ belongs to the space $c_{00}(\Z)$ of sequences with only finitely many nonzero components then clearly $\sum_{k \in \Z} c_k T_{\s k}u$ is a well-defined function in $\mathcal{V}_\s(u)$ and no convergence issues appear. Care should be taken if $\{ c_k \} \notin c_{00}(\Z)$: since we did not make any additional assumptions on the generating function $u \in \lt$ (in particular, we are not assuming that $\{ T_{\s k} u : k\in \Z \}$ is a Riesz basis or frame for $\mathcal{V}_\s(u)$), we may not be able to represent every function $f \in \mathcal{V}_\s(u)$ in the form $\sum_{k \in \Z} c_k T_{\s k} u$. An assumption which guarantees unconditional convergence for every square-summable sequence $\{ c_k \} \in \ell^2(\Z)$ is the requirement that $\{ T_{\s k} u : k\in \Z \}$ is a Bessel sequence \cite[Corollary 3.2.5]{christensenBook}. Recall that a sequence $\{ x_k \}$ in a Hilbert space $H$ is a Bessel sequence if 
$$
\forall x \in H : \sum_{k \in \Z} |\langle x,x_k \rangle|^2 < \infty.
$$
\begin{theorem}\label{thm:bessel}
If $\{ x_k : k \in \Z \}$ is a Bessel sequence in a Hilbert space $H$ then for every $\{ c_k \} \in \ell^2(\Z)$ the series $\sum_{k \in \Z} c_k x_k$ converges unconditionally in $H$.
\end{theorem}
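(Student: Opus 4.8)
The plan is to reduce the statement to a single uniform norm estimate on finite partial sums — namely that the synthesis map $\{c_k\} \mapsto \sum_k c_k x_k$ is bounded by $\sqrt{B}$ times the $\ell^2$-norm of the coefficients, where $B$ is the Bessel bound — and then to extract unconditional convergence from the Cauchy criterion by exploiting that the tail of a square-summable sequence is arbitrarily small.

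First I would fix the Bessel bound: by hypothesis there is a constant $B > 0$ with $\sum_k |\langle x, x_k \rangle|^2 \leq B \|x\|^2$ for every $x \in H$. The core of the argument is a duality estimate. For any finite subset $F \subset \Z$ and any $h \in H$ with $\|h\| \leq 1$, I expand the pairing and apply the Cauchy--Schwarz inequality in $\ell^2(F)$, followed by the Bessel bound:
$$
\left| \left\langle \sum_{k \in F} c_k x_k, h \right\rangle \right| = \left| \sum_{k \in F} c_k \langle x_k, h \rangle \right| \leq \left( \sum_{k \in F} |c_k|^2 \right)^{1/2} \left( \sum_{k \in F} |\langle h, x_k \rangle|^2 \right)^{1/2} \leq \sqrt{B}\, \|h\| \left( \sum_{k \in F} |c_k|^2 \right)^{1/2}.
$$
Taking the supremum over all such $h$ identifies the left-hand side with the norm of the partial sum, giving the key inequality $\left\| \sum_{k \in F} c_k x_k \right\| \leq \sqrt{B}\, \bigl( \sum_{k \in F} |c_k|^2 \bigr)^{1/2}$, valid for every finite $F$ and every scalar sequence.

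With this bound in hand the convergence statement follows from the Cauchy criterion for unconditional convergence. Given $\{c_k\} \in \ell^2(\Z)$ and $\varepsilon > 0$, square-summability furnishes a finite set $F_0 \subset \Z$ with $\sum_{k \notin F_0} |c_k|^2 < \varepsilon^2 / B$. For any finite $F \subset \Z$ disjoint from $F_0$, the key inequality yields $\left\| \sum_{k \in F} c_k x_k \right\| \leq \sqrt{B}\, \bigl( \sum_{k \in F} |c_k|^2 \bigr)^{1/2} < \varepsilon$. Since $\varepsilon$ was arbitrary, the net of finite partial sums is Cauchy in $H$, and completeness of $H$ then gives unconditional convergence of $\sum_k c_k x_k$.

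I expect no serious obstacle; the only point demanding care is the duality step, where the Bessel inequality must be invoked in the correct slot — it controls $\sum_k |\langle h, x_k \rangle|^2$ uniformly in $\|h\|$, which is exactly what converts the Cauchy--Schwarz split into a bound depending solely on the $\ell^2$-norm of the coefficients and the ambient norm of $h$. Everything beyond this is the elementary observation that the Bessel property makes the synthesis operator bounded on finitely supported sequences, after which $\ell^2$-summability supplies the vanishing tails required by the Cauchy criterion.
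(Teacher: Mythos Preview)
Your argument is correct and is the standard proof of this classical fact. Note, however, that the paper does not actually prove Theorem~\ref{thm:bessel}: it is quoted from \cite[Corollary~3.2.5]{christensenBook} and used as a black box, so there is no in-paper proof to compare against.

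One small point worth tightening: the paper's definition of a Bessel sequence requires only that $\sum_k |\langle x, x_k\rangle|^2 < \infty$ for every $x \in H$, without explicitly postulating a uniform bound $B$. Your opening line ``by hypothesis there is a constant $B>0$ \ldots'' therefore needs a one-line justification: the analysis operator $x \mapsto \{\langle x, x_k\rangle\}_k$ maps $H$ into $\ell^2(\Z)$ and is easily seen to have closed graph, so the closed graph theorem (or the uniform boundedness principle applied to the truncated maps) furnishes the bound $B$. Once that is in place, your duality estimate and the Cauchy-net argument for unconditional convergence go through exactly as written.
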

If one would apply the Fourier transform to $\mathcal{V}_\s(u)$ then the resulting space
$$
\mathcal{M}_\s(u) = \left \{ \ft h : h \in \mathcal{V}_\s(u) \right \}
$$
is modulation invariant (or: invariant under a Fourier shift), i.e. $M_{\s n}h \in \mathcal{M}_\s(u)$ for every $n \in \Z$ provided that $h \in \mathcal{M}_\s(u)$. Now observe that
$$
T_{\s k} = \mathrm{Id} \, T_{\s k} \,  \mathrm{Id}, \ \ M_{\s k} = \ft^{-1} T_{\s k} \ft,
$$
or, in the language of fractional Fourier transform,
$$
T_{\s k} = \ft_0 T_{\s k} \ft_0, \ \ M_{\s k} = \ft_{-\frac{\pi}{2}} T_{\s k} \ft_{\frac{\pi}{2}}.
$$
The previous identities motivate the definition of a fractional Fourier shift.

\begin{definition}
Let $\tau,\theta \in \R$. The operator $T_\tau^\theta$, defined by
$$
T_\tau^\theta \coloneqq \ft_{-\theta} T_{\tau} \ft_\theta
$$
is called the fractional Fourier shift by $\tau$ of order $\theta$.
\end{definition}

We have $T_\tau^0 = T_\tau$ and $T_\tau^{\frac{\pi}{2}} = M_\tau$. Hence, the operator $T_\tau^\theta$ interpolates between the ordinary shift and the modulation, i.e. a Fourier shift. In a natural way, the fractional Fourier shift extends the concept of classical shift-invariant spaces.

\begin{definition}
Let $u \in \lt$ be a generating function and $\s >0$. The space 
$$
\mathcal{V}_\mathfrak{s}^\theta (u) \coloneqq \overline{\lspan \left \{ T_{\s k}^\theta u : k \in \Z \right \}}
$$
is called the shift-invariant space associated to the fractional Fourier shift generated by $u$ and step-size $\s >0$. In the case $\theta = 0$ we have $\mathcal{V}_\s^0(u) = \mathcal{V}_\s(u)$ with $\mathcal{V}_\s(u)$ being defined as in \eqref{definition:calssical_si_space}.
\end{definition}

For details and properties on shift-invariant spaces associated to the fractional Fourier shift, the reader may consult the article \cite{Zayed} and the references therein. A direct application of Theorem \ref{thm:bessel} gives the following statement.

\begin{corollary}\label{cor:uncond_convergence}
Let $u \in \lt$ be a generating function, $\s >0$ a step-size and $\theta \in \R$ an order. If the system of fractional Fourier shifts $\{ T_{\s k}^\theta u : k \in \Z \}$ constitutes a Bessel sequence in $\lt$ then for every $\{ c_k \} \in \ell^2(\Z)$ the series $\sum_{k \in \Z} c_k T_{\s k}^\theta u$ converges unconditionally to an element in $\mathcal{V}_\s^\theta(u)$.
\end{corollary}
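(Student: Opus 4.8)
The plan is to apply Theorem \ref{thm:bessel} verbatim with the specific Bessel sequence furnished by the fractional Fourier shifts, and then to verify that the resulting limit lands in the prescribed closed subspace. Concretely, I would set $H = \lt$ and $x_k \coloneqq T_{\s k}^\theta u$ for $k \in \Z$. By hypothesis the family $\{ x_k : k \in \Z \}$ is a Bessel sequence in $H$, so Theorem \ref{thm:bessel} immediately yields that for every $\{ c_k \} \in \ell^2(\Z)$ the series $\sum_{k \in \Z} c_k x_k = \sum_{k \in \Z} c_k T_{\s k}^\theta u$ converges unconditionally in $\lt$. This is the entire analytic content of the statement, and no further estimate is required.

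It then remains only to identify the limit as an element of $\mathcal{V}_\s^\theta(u)$. For this I would observe that every finite partial sum $\sum_{k \in F} c_k T_{\s k}^\theta u$, with $F \subset \Z$ finite, is a finite linear combination of the generators and therefore lies in $\lspan \{ T_{\s k}^\theta u : k \in \Z \} \subseteq \mathcal{V}_\s^\theta(u)$. Since $\mathcal{V}_\s^\theta(u)$ is by definition the $L^2$-closure of this span, it is a closed subspace of $\lt$, and the limit of a convergent sequence of elements of a closed subspace again lies in that subspace. Hence $\sum_{k \in \Z} c_k T_{\s k}^\theta u \in \mathcal{V}_\s^\theta(u)$, as claimed.

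There is essentially no obstacle in this argument: the corollary is a direct specialization of Theorem \ref{thm:bessel} combined with the elementary fact that closed subspaces are stable under passage to limits. The only point deserving a word of care is that unconditional convergence guarantees a well-defined limit independent of the enumeration of $\Z$, so that the membership conclusion does not depend on how the series is summed. This independence is already part of the conclusion of Theorem \ref{thm:bessel}, and so it requires no additional work here.
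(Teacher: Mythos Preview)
Your proposal is correct and matches the paper's approach exactly: the paper states the corollary as ``a direct application of Theorem \ref{thm:bessel}'' without further proof, and your argument spells out precisely this application together with the trivial observation that the closed subspace $\mathcal{V}_\s^\theta(u)$ contains the limit of its finite partial sums.
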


\section{Main results}\label{sec:main}

In this section, we present and prove the main results of the paper. We derive functions $f_1,f_2 \in \lt$ which have the property that their spectrograms agree on certain selected subsets of the time-frequency plane. Moreover, we discuss under which assumptions such functions do not agree up to a global phase and which conditions imply that they can be chosen to be real-valued.

\subsection{Identical spectrograms and non-equivalence conditions}\label{subsec:general_case}

Let $u \in \lt, \s>0$ and $\theta \in \R$. In the sequel we shall fix the following notation: whenever $f \in \mathcal{V}_\s^\theta(u)$ has a representation of the form $f = \sum_{k \in \Z} c_k T_{\s k}^\theta u$ with a defining sequence $\{ c_k \} \subset \C$ then the map $f_\times$ is defined by
$$
f_\times \coloneqq \sum_{k \in \Z} \overline{c_k} T_{\s k}^\theta u,
$$
i.e. $f_\times$ arises from $f$ via complex conjugation of the defining sequence $\{ c_k \}$. We now show that a suitable choice of the generator $u$ implies equality of the spectrogram of $f$ and $f_\times$ on parallel lines of the form
\begin{equation}
    R_\theta (\R \times h\Z) \coloneqq \left \{ 
    \begin{pmatrix}
\cos \theta & -\sin \theta\\
\sin\theta & \cos\theta
\end{pmatrix}
\begin{pmatrix}
x\\
\omega
\end{pmatrix}
: x \in \R, \, \omega \in h\Z
  \right \}, \ \ h > 0.
\end{equation}

\begin{theorem}\label{theorem:parallel_lines}
Let $g \in \lt$ be a window function, $\s>0$ a step-size and $\theta \in \R$ an order. If $f \in \mathcal{V}_\s^\theta(\mathcal{R}g)$ has defining sequence $\{ c_k \} \in c_{00}(\Z)$ then 
$$
|V_gf(R_\theta(\R \times \tfrac{1}{\s} \Z))| = |V_g(f_\times)(R_\theta(\R \times \tfrac{1}{\s} \Z))|.
$$
If, in addition, the system $\{ T_{\s k}^\theta(\mathcal{R}g) : k \in \Z \}$ is a Bessel sequence then the same conclusion holds true, provided that $f$ has defining sequence $\{ c_k \} \in \ell^2(\Z)$.
\end{theorem}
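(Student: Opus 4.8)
The plan is to reduce the assertion to the special case $\theta = 0$ via the rotation property of the fractional Fourier transform, and then to establish an explicit pointwise symmetry of $V_gf$ along horizontal lines with frequency in $\tfrac1\s\Z$.

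First I would invoke Theorem \ref{thm:frac_rotation}: passing to moduli annihilates the unimodular prefactors, so $|V_gf(R_\theta(x,\omega))| = |V_{\ft_\theta g}(\ft_\theta f)(x,\omega)|$ for all $(x,\omega)\in\R^2$, and the claimed identity on $R_\theta(\R\times\tfrac1\s\Z)$ becomes
\[
\bigl|V_{\ft_\theta g}(\ft_\theta f)(x,\tfrac n\s)\bigr| = \bigl|V_{\ft_\theta g}(\ft_\theta f_\times)(x,\tfrac n\s)\bigr|, \qquad x\in\R,\ n\in\Z .
\]
Using $T_{\s k}^\theta = \ft_{-\theta}T_{\s k}\ft_\theta$, the group law $\ft_\theta\ft_{-\theta} = \mathrm{Id}$, and the commutation of $\ft_\theta$ with $\mathcal{R}$ (Lemma \ref{lemma:fracft_properties}), I would then compute $\ft_\theta T_{\s k}^\theta(\mathcal{R}g) = T_{\s k}\mathcal{R}(\ft_\theta g)$. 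Hence $\ft_\theta f = \sum_k c_k T_{\s k}\mathcal{R}(\ft_\theta g)$ lies in $\mathcal{V}_\s(\mathcal{R}(\ft_\theta g))$ with the same defining sequence $\{c_k\}$, the $\times$-operation intertwines with $\ft_\theta$ (so $\ft_\theta f_\times = (\ft_\theta f)_\times$), and — since $\ft_\theta$ is unitary — $\{T_{\s k}^\theta(\mathcal{R}g)\}$ is Bessel exactly when $\{T_{\s k}\mathcal{R}(\ft_\theta g)\}$ is. After renaming $\ft_\theta g\rightsquigarrow g$, $\ft_\theta f\rightsquigarrow f$, only $\theta = 0$ remains.

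For $\theta = 0$ we have $f = \sum_k c_k T_{\s k}\mathcal{R}g$ and $f_\times = \sum_k \overline{c_k}T_{\s k}\mathcal{R}g$. I would expand $V_gf$ termwise — trivially legitimate for finite sums, and for $\{c_k\}\in\ell^2(\Z)$ justified by the unconditional $L^2$-convergence furnished by Corollary \ref{cor:uncond_convergence} together with the fact that $h\mapsto V_gh(x,\omega) = \langle h, M_\omega T_x g\rangle$ is a bounded linear functional — and then apply the covariance relation (Lemma \ref{lemma:comm_relations}(6) with $\nu = 0$) to obtain
\[
V_gf(x,\omega) = \sum_k c_k\, e^{-2\pi i \s k\omega}\,\Phi(x-\s k,\omega), \qquad \Phi \coloneqq V_g(\mathcal{R}g).
\]
A change of variables $t\mapsto y-t$ in the integral defining $\Phi$ yields the identity $\overline{\Phi(y,\omega)} = e^{2\pi i\omega y}\Phi(y,\omega)$. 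Evaluating at $\omega = \tfrac n\s$ and $y = x-\s k$, the twist collapses to the $k$-independent factor $e^{2\pi i n x/\s}$, and simultaneously $e^{-2\pi i \s k\omega} = 1$; conjugating the displayed expansion and pulling the $k$-independent factor out of the sum then gives $\overline{V_gf(x,\tfrac n\s)} = e^{2\pi i n x/\s}\,V_gf_\times(x,\tfrac n\s)$, and taking absolute values concludes.

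The main obstacle I anticipate is the $\ell^2$ case: one has to make sure the termwise manipulations of the (now infinite) series are valid, which is exactly where the Bessel hypothesis enters, through unconditional $L^2$-convergence of $\sum_k c_k T_{\s k}^\theta(\mathcal{R}g)$ and continuity of the evaluation functionals; alternatively one truncates $\{c_k\}$ to $c_{00}(\Z)$, applies the finite-sum argument, and passes to the limit using the Bessel bound. The algebraic crux — the simultaneous vanishing of the chirp factors $e^{-2\pi i\s k\omega}$ and the collapse of the twist in the $\Phi$-symmetry — hinges on the frequency spacing $\tfrac1\s$ being precisely dual to the translation step $\s$ of the shift-invariant space, which is what pins the conclusion to lines of exactly this spacing.
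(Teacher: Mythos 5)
Your proposal is correct and follows essentially the same route as the paper: the rotation property to pass to horizontal lines, covariance to expand over the shifted generators, and the simultaneous collapse at $\omega\in\tfrac1\s\Z$ of the chirp factors $e^{-2\pi i\s k\omega}$ and of a conjugation symmetry of the cross term $V_g(\mathcal{R}g)$. Your change-of-variables identity $\overline{\Phi(y,\omega)}=e^{2\pi i\omega y}\Phi(y,\omega)$ is precisely the relation the paper extracts through a longer chain of operator identities (Lemmas \ref{lemma:comm_relations} and \ref{lemma:fracft_properties}), and your up-front reduction to $\theta=0$ together with the explicit phase relation $\overline{V_gf(x,\tfrac n\s)}=e^{2\pi i nx/\s}\,V_g(f_\times)(x,\tfrac n\s)$ packages the same argument a bit more transparently.
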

\begin{proof}
Let $(x,\omega) \in \R^2$ and let $R_\theta$ be the rotation matrix as defined in equation \eqref{def:rotation_matrix}. Suppose that $f \in \mathcal{V}_\s^\theta(\mathcal{R}g)$ has defining sequence $\{c_k \} \in c_{00}(\Z)$. Invoking the definition of the fractional Fourier shift, $f$ can be written as
$$
f = \ft_{-\theta} \sum_{k \in \Z} c_k T_{\s k}\ft_\theta \mathcal{R}g.
$$
We now evaluate the spectrogram of $f$ with respect to the window function $g$ at a $\theta$-rotation of $(x,\omega)$. 
Consulting the properties of the fractional Fourier transform as presented in Lemma \ref{lemma:fracft_properties} yields

\begin{align}\label{33}
        |V_gf(R_\theta(x,\omega))|  & \textoverset[0]{Theorem \ref{thm:frac_rotation}}{=} \left | V_{\ft_\theta g} \left ( \sum_{k\in\Z} c_k T_{\s k}\ft_\theta \mathcal{R}g \right ) (x,\omega) \right | \nonumber \\
          & \textoverset[0]{linearity}{=} \left | \sum_{k \in \Z} c_k V_{\ft_\theta g}(T_{\s k}\ft_\theta \mathcal{R}g)(x,\omega) \right | \nonumber \\
          & \textoverset[0]{Lemma \ref{lemma:comm_relations}(6)}{=} \left | \sum_{k \in \Z} c_k e^{-2 \pi i \s k \omega} V_{\ft_\theta g}(\ft_\theta \mathcal{R}g)(x-\s k,\omega) \right | \nonumber \\
         & \textoverset[0]{}{=} \left | \sum_{k \in \Z} c_k e^{-2 \pi i \s k \omega} \ft \left ( (\ft_\theta \mathcal{R}g)( \overline{T_{x-\s k} \ft_\theta g})  \right )(\omega)\right | \nonumber \\
         & \textoverset[0]{Lemma \ref{lemma:fracft_properties}(4)}{=} \left | \sum_{k \in \Z} c_k e^{-2 \pi i \s k \omega} \ft \left ( (\ft_\theta \mathcal{R}g) (T_{x-\s k} \ft_{-\theta} \overline{g})  \right )(\omega)\right |.
\end{align}
We continue by inspecting the spectrogram of $f_\times$.
Replacing $c_k$ with $\overline{c_k}$ in equation \eqref{33} shows that the spectrogram of $f_\times$ satisfies
\begin{align*}
        |V_g(f_\times)(R_\theta(x,\omega))| & \textoverset[0]{}{=} \left | \sum_{k \in \Z} \overline{c_k} e^{-2 \pi i \s k \omega} \ft \left ( (\ft_\theta \mathcal{R}g) (T_{x-\s k} \ft_{-\theta} \overline{g})  \right )(\omega) \right | \\
        & \textoverset[0]{}{=} \left | \sum_{k \in \Z} c_k e^{2 \pi i \s k \omega} \overline{\ft \left ( (\ft_\theta \mathcal{R}g) (T_{x-\s k} \ft_{-\theta} \overline{g})  \right )(\omega)} \right | \\
        & \textoverset[0]{Lemma \ref{lemma:comm_relations}(5)}{=} \left | \sum_{k \in \Z} c_k e^{2 \pi i \s k \omega} \mathcal{R} \ft \left ( \overline{(\ft_\theta \mathcal{R}g) (T_{x-\s k} \ft_{-\theta} \overline{g})  }\right )(\omega) \right | \\
        & \textoverset[0]{Lemma \ref{lemma:fracft_properties}(4)}{=} \left | \sum_{k \in \Z} c_k e^{2 \pi i \s k \omega} \mathcal{R} \ft \left ( (\ft_{-\theta} \mathcal{R}\overline{g}) (T_{x-\s k} \ft_{\theta} g)  \right )(\omega) \right | \\
        & \textoverset[0]{Lemma \ref{lemma:fracft_properties}(3)}{=} \left | \sum_{k \in \Z} c_k e^{2 \pi i \s k \omega}  \ft \left ( \mathcal{R}[(\ft_{-\theta} \mathcal{R}\overline{g}) (T_{x-\s k} \ft_{\theta} g)]  \right )(\omega) \right | \\
        & \textoverset[0]{Lemma \ref{lemma:fracft_properties}(3)}{=} \left | \sum_{k \in \Z} c_k e^{2 \pi i \s k \omega}  \ft \left ( (\ft_{-\theta} \overline{g}) (T_{\s k-x} \ft_{\theta} \mathcal{R} g)  \right )(\omega) \right | \\
        & \textoverset[0]{Lemma \ref{lemma:comm_relations}(2)}{=} \left | \sum_{k \in \Z} c_k e^{2 \pi i \s k \omega} e^{-2\pi i \omega (\s k -x)}  \ft \left ( (T_{x-\s k}\ft_{-\theta} \overline{g}) (\ft_{\theta} \mathcal{R} g)  \right )(\omega) \right | \\
        & \textoverset[0]{}{=} \left |  \sum_{k \in \Z} c_k \ft \left ( (T_{x-\s k}\ft_{-\theta} \overline{g}) (\ft_{\theta} \mathcal{R} g)  \right )(\omega) \right |.
\end{align*}
Now let $\omega = \frac{n}{\s}$ for some $n \in \Z$. In this case, the term $e^{-2 \pi i \s k \omega}$ appearing in the equation \eqref{33} simplifies to $e^{-2 \pi i \s k \omega} = e^{-2 \pi i kn}=1$. This shows that $|V_gf(R_\theta(x,\tfrac{n}{\s}))| = |V_g(f_\times)(R_\theta(x,\tfrac{n}{\s}))|$ for every $n \in \Z$. Since $x \in \R$ was arbitrary, we obtain the desired equality of the two spectrograms on the parallel lines $R_\theta(\R \times \tfrac{1}{\s} \Z)$.
Under the additional assumption that the system $\{ T_{\s k}^\theta(\mathcal{R}g) : k \in \Z \}$ forms a Bessel sequence, Corollary \ref{cor:uncond_convergence} implies that for every $\{ c_k \} \in \ell^2(\Z)$ the series $\sum_{k \in \Z} c_k T_{\s k} \mathcal{R}g$ converges unconditionally in $\lt$. Since $\{ \overline{c_k} \} \in \ell^2(\Z)$ if and only if $\{c_k \} \in \ell^2(\Z)$, the same holds for the series $\sum_{k \in \Z} \overline{c_k} T_{\s k} \mathcal{R}g$. By continuity and linearity of the fractional Fourier transform, the exact same argument as above implies equality of the spectrogram of $f$ and $f_\times$ on $R_\theta(\R \times \tfrac{1}{\s} \Z)$.
\end{proof}

The statement obtained in Theorem \ref{theorem:parallel_lines} is visualized in Figure \ref{fig:hermite} and Figure \ref{fig:exp_arctan}.
The functions $f$ and $f_\times$ defined in Theorem \ref{theorem:parallel_lines} constitute candidates of functions for which their spectrograms agree on parallel lines whereas the functions themselves might not agree up to a global phase. We now examine criteria on the defining sequence $\{ c_k \}$ of $f$ so that $f$ and $f_\times$ do not agree up to a global phase. Before doing so, we recall the concept of $\omega$-independence \cite[p. 34]{Young}.

\begin{definition}
A sequence $\{ x_n\}_{n \in \Z}$ of elements in a Banach space $X$ is said to be $\omega$-independent if the equality
\begin{equation}\label{norm_conv_series}
    \sum_{n \in \Z} c_n x_n = 0
\end{equation}
is possible only for $c_n = 0, n \in \Z$.
\end{definition}

Recall that in the definition of a linearly independent system, the norm-convergent series in \eqref{norm_conv_series} is replaced by a finite sum. Systems of translates of a function which constitute an $\omega$-independent sequence were studied in \cite{rrron}. We continue by introducing a class of sequences which obeys a geometric property.

\begin{definition}
The subset $\ell^2_\mathcal{O}(\Z) \subset \ell^2(\Z)$ of square-summable sequences whose elements do not lie on a line in the complex plane passing through the origin is defined as
$$
\ell^2_\mathcal{O}(\Z) \coloneqq \left \{  \{c _k \} \in \ell^2(\Z): \nexists \, \alpha \in \R \ \mathrm{s.t.} \ \{ c_k \} \subset e^{i\alpha} \R   \right \}.
$$
\end{definition}

Note that $\ell^2_\mathcal{O}(\Z)$ is invariant under complex conjugation, i.e. $\{ c_k \} \in \ell^2_\mathcal{O}(\Z)$ if and only if $\{ \overline{c_k} \} \in \ell^2_\mathcal{O}(\Z)$. In order to show that sequences in $\ell^2_\mathcal{O}(\Z)$ produce functions $f$ and $f_\times$ which do not agree up to a global phase, we make use of the following simple observation.

\begin{lemma}\label{lemma:funcion_values_lines}
Let $Y$ be an arbitrary set and $f : Y \to \C$ a complex-valued map. Then $f \sim \overline{f}$ if and only if there exists an $\alpha \in \R$ such that $f(y) \in e^{i\alpha}\R$ for every $y \in Y$.
\end{lemma}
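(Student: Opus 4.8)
The plan is to unwind the definition of $\sim$: for maps $Y \to \C$, the relation $f \sim \overline f$ means precisely that there exists a constant $\nu \in \T$ with $f = \nu\,\overline f$ holding pointwise on $Y$. Both directions then follow by an elementary pointwise argument; the only genuine idea is that every $\nu \in \T$ has a square root, determined only up to sign, which is exactly why a full line $e^{i\alpha}\R$ through the origin (rather than merely a ray) appears in the statement.

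For the implication ``$\Leftarrow$'' I would argue as follows. Assume there is an $\alpha \in \R$ with $f(y) \in e^{i\alpha}\R$ for every $y \in Y$, so that $f(y) = e^{i\alpha} r(y)$ with $r(y) \in \R$. Conjugating gives $\overline{f(y)} = e^{-i\alpha} r(y)$, and hence $f(y) = e^{2i\alpha}\,\overline{f(y)}$ for all $y$. Thus $f = \nu\,\overline f$ with $\nu = e^{2i\alpha} \in \T$, i.e. $f \sim \overline f$.

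For ``$\Rightarrow$'' I would start from $f = \nu\,\overline f$ with $\nu \in \T$ and write $\nu = e^{i\beta}$. If $f \equiv 0$ the conclusion is trivial (any $\alpha$ works), so assume $f \not\equiv 0$. At any point $y$ with $f(y) \neq 0$, write $f(y) = |f(y)|\,e^{i\phi(y)}$; then the identity $f(y) = e^{i\beta}\,\overline{f(y)}$ becomes $e^{2i\phi(y)} = e^{i\beta}$, forcing $\phi(y) \in \tfrac{\beta}{2} + \pi\Z$ and hence $f(y) \in e^{i\beta/2}\R$. Points $y$ with $f(y) = 0$ lie on every line through the origin, in particular on $e^{i\beta/2}\R$. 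Taking $\alpha = \beta/2$ completes the argument.

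There is no real obstacle here — this is a bookkeeping lemma. The only point worth flagging is that the ambiguity $\alpha \mapsto \alpha + \pi$ in passing from $\nu$ to its square root corresponds precisely to the symmetry $e^{i\alpha}\R = e^{i(\alpha+\pi)}\R$, and that, since $Y$ carries no topology or measure, the argument is entirely pointwise with no regularity caveats. This is what makes the lemma directly applicable to the spectrogram-coincidence sets produced in Theorem~\ref{theorem:parallel_lines}.
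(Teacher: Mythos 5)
Your proof is correct and follows essentially the same route as the paper: in both directions you reduce to the pointwise identity $f(y)=\nu\,\overline{f(y)}$, extract the phase to get $e^{2i\phi(y)}=\nu$ on the nonvanishing set, and observe that zeros lie on every line through the origin. If anything, your converse direction is slightly more careful than the paper's, which writes $f=e^{i\alpha}|f|$ (valid only on a ray) where you correctly use $f(y)=e^{i\alpha}r(y)$ with $r(y)\in\R$.
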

\begin{proof}
We first prove the sufficiency of the property $f \sim \overline{f}$. To this end, let $\nu \in \R$ such that $f(y)=e^{i\nu}\overline{f(y)}$ for every $y \in Y$. Let $f(y)=e^{i\phi(y)}|f(y)|$ and $\overline{f(y)}=e^{-i\phi(y)}|f(y)|$ with $\phi$ being a phase function corresponding to $f$. Let $\Lambda = \{ y \in Y : f(y) \neq 0 \}$. It follows that
$$
e^{i(2\phi(y)-\nu)} = 1
$$
for every $y \in \Lambda$ which implies that there exists an $n(y) \in \Z$ so that $\phi(y) = \pi n(y) + \tfrac{\nu}{2}$. Thus,
$$
f(y) = e^{i\pi n(y)}e^{i\frac{\nu}{2}} |f(y)| = e^{i\frac{\nu}{2}} \left ( (-1)^{n(y)} |f(y)| \right )
$$
which shows that $f$ takes values on the line $e^{i\frac{\nu}{2}}\R$. Conversely, suppose that $f$ takes values on a line $e^{i\alpha}\R$. Then $f = e^{i\alpha}|f| = e^{2i\alpha} \overline{f}$ which implies that $f$ and $\overline{f}$ agree up to a global phase.
\end{proof}

With the aid of the previous Lemma we can now show that the assumption on a defining sequence belonging to $\ell^2_\mathcal{O}(\Z)$ gives rise to non-equivalent function pairs. This is the main content of

\begin{theorem}\label{theorem:non_uniqueness}
Let $0 \neq u \in \lt, \s >0$ and suppose that $f \in \mathcal{V}_\s^\theta(u)$ has defining sequence $\{ c_k \} \subset \C$. Then the following holds:
\begin{enumerate}
    \item If $\{ c_k \} \in \ell^2_\mathcal{O}(\Z) \cap c_{00}(\Z)$ then $f \nsim f_\times$.
    \item If the generating function $u$ has the property that the system of fractional Fourier translates $ \{ T_{\s k}^\theta u : k \in \Z \}$ forms an $\omega$-independent Bessel sequence then
    $$
    \{ c_k \} \in \ell^2_\mathcal{O}(\Z) \implies f \nsim f_\times.
    $$
\end{enumerate}
\end{theorem}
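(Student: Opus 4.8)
The plan is to argue by contraposition: keeping the standing hypotheses, I would assume $f\sim f_\times$ and show that the defining sequence $\{c_k\}$ must lie on a single line through the origin in $\C$, which is exactly the negation of $\{c_k\}\in\ell^2_\mathcal{O}(\Z)$. The two items will follow the same skeleton and differ only in the tool used to pass from an identity between functions to an identity between coefficients.

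First I would conjugate the whole situation by the unitary operator $\ft_\theta$. Since $\ft_\theta T_{\s k}^\theta=T_{\s k}\ft_\theta$, setting $v\coloneqq\ft_\theta u$ (with $v\neq0$, as $u\neq0$ and $\ft_\theta$ is injective) turns $f$ and $f_\times$ into $\ft_\theta f=\sum_k c_k T_{\s k}v$ and $\ft_\theta f_\times=\sum_k\overline{c_k}T_{\s k}v$; in case (1) these are finite sums, and in case (2) the series converge unconditionally by Corollary \ref{cor:uncond_convergence}, because $\ft_\theta$ carries the $\omega$-independent Bessel system $\{T_{\s k}^\theta u\}$ onto the $\omega$-independent Bessel system $\{T_{\s k}v\}$. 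Now if $f=\nu f_\times$ for some $\nu\in\T$, applying $\ft_\theta$ and rearranging yields
$$
\sum_{k\in\Z}\bigl(c_k-\nu\,\overline{c_k}\bigr)T_{\s k}v=0 .
$$
In case (1) the sum is finite, so I would take the Fourier transform and use Lemma \ref{lemma:comm_relations}(2): the identity becomes $\widehat v(\xi)\cdot\sum_k(c_k-\nu\overline{c_k})e^{-2\pi i\s k\xi}=0$ for a.e.\ $\xi$. Unless all coefficients $c_k-\nu\overline{c_k}$ vanish, the second factor is a nonzero trigonometric polynomial in $\s\xi$, hence has only finitely many zeros per period and is nonzero a.e.; since $\widehat v$ is nonzero on a set of positive measure, this forces $c_k=\nu\,\overline{c_k}$ for every $k$. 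In case (2), $\{c_k-\nu\overline{c_k}\}\in\ell^2(\Z)$ and the displayed series is norm-convergent, so $\omega$-independence of $\{T_{\s k}v\}$ gives the same conclusion $c_k=\nu\,\overline{c_k}$ for all $k$.

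To finish, I would read $c_k=\nu\,\overline{c_k}$ ($k\in\Z$) as the statement that the map $\Z\to\C$, $k\mapsto c_k$, agrees with its complex conjugate up to the global phase $\nu$; Lemma \ref{lemma:funcion_values_lines}, applied with $Y=\Z$, then produces an $\alpha\in\R$ with $c_k\in e^{i\alpha}\R$ for all $k$, i.e.\ $\{c_k\}\notin\ell^2_\mathcal{O}(\Z)$. This contradicts the hypothesis in both items, proving $f\nsim f_\times$. The only step that is not pure bookkeeping is the passage from the functional identity to the coefficient identity: in case (2) it is a direct appeal to the definition of $\omega$-independence together with Corollary \ref{cor:uncond_convergence}, while in case (1) it is the linear independence of the translates $\{T_{\s k}v\}$ of a nonzero $L^2$ function, obtained from the Fourier-transform argument above and the finiteness of the zero set of a nonzero trigonometric polynomial. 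I expect this to be the main (though still routine) obstacle; everything else reduces to the operator identities in Lemmas \ref{lemma:comm_relations} and \ref{lemma:fracft_properties}.
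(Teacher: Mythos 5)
Your proposal is correct and follows essentially the same route as the paper: assume $f=\nu f_\times$, reduce via $\ft_\theta$ to $\sum_k(c_k-\nu\overline{c_k})T_{\s k}(\ft_\theta u)=0$, conclude $c_k=\nu\overline{c_k}$ for all $k$, and invoke Lemma \ref{lemma:funcion_values_lines} to contradict $\{c_k\}\in\ell^2_\mathcal{O}(\Z)$. The only difference is cosmetic: in case (1) the paper simply cites the linear independence of the translates of a nonzero $L^2$ function, whereas you re-derive that fact inline via the Fourier transform and the finiteness of the zero set of a nonzero trigonometric polynomial.
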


\begin{figure}
\centering
\begin{subfigure}{.5\textwidth}
  \centering
  \includegraphics[width=0.9\linewidth]{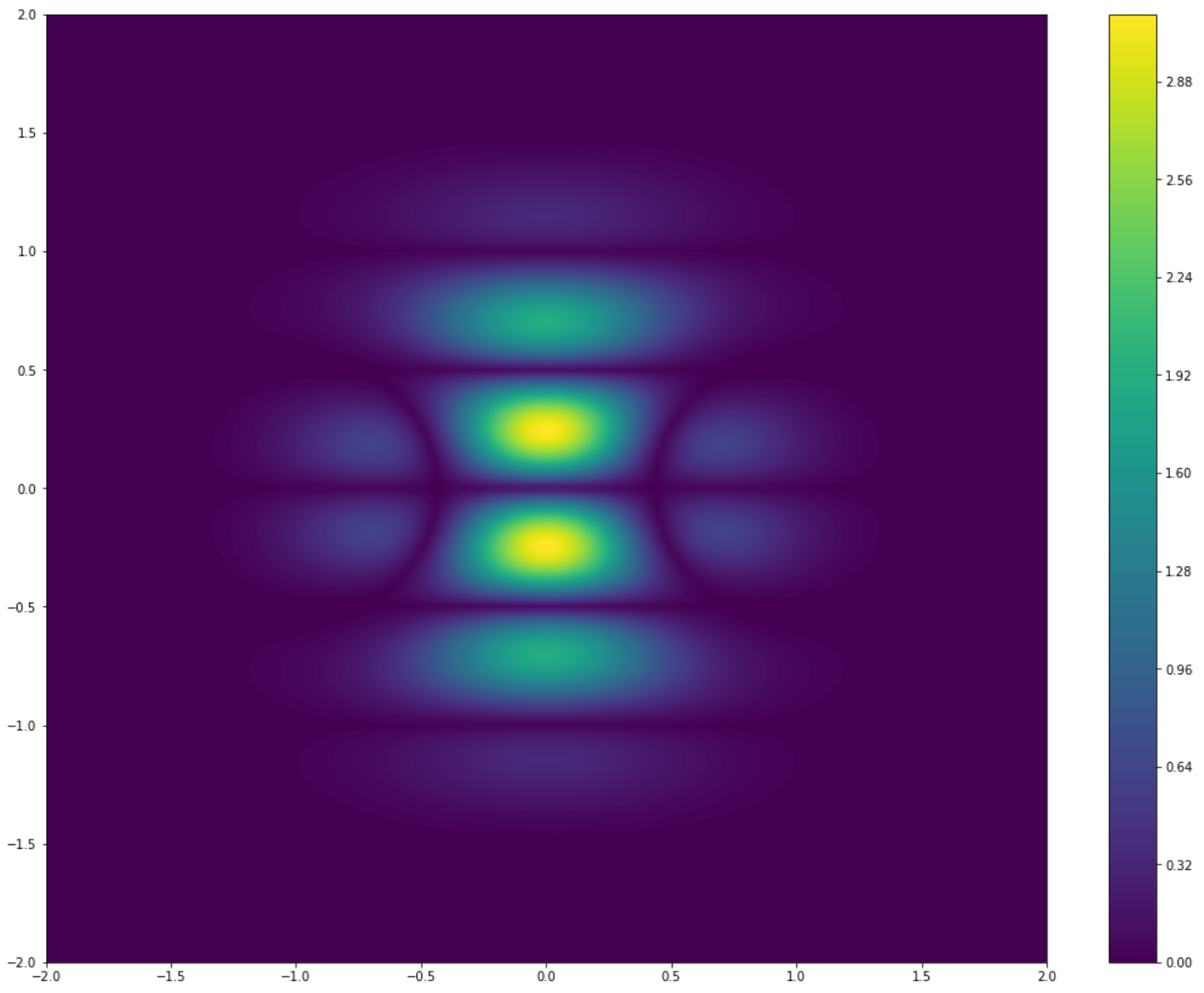}
  \caption{}
  \label{fig:sub1}
\end{subfigure}%
\begin{subfigure}{.5\textwidth}
  \centering
  \includegraphics[width=0.9\linewidth]{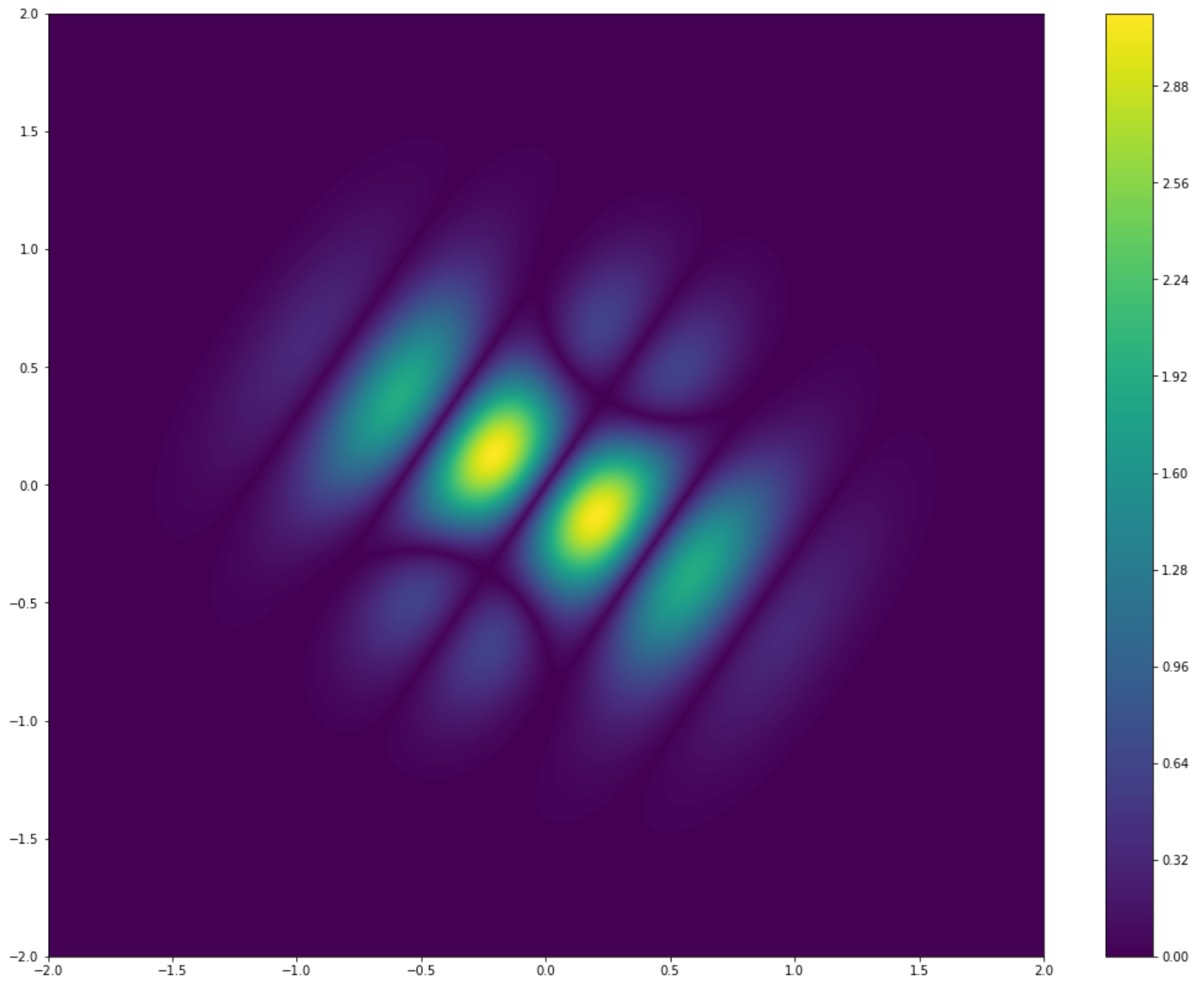}
  \caption{}
  \label{fig:sub2}
\end{subfigure}
\caption{Visualisation of the conclusion of Theorem \ref{theorem:parallel_lines}: the window $g$ is chosen to be the first Hermite basis function, $g=H_1$. The function $f \in \mathcal{V}_\s^\theta(\mathcal{R} g)$ is chosen to have defining sequence $\{ c_k \} \in \ell^2_\mathcal{O}(\Z)$ with $c_{-1}=1,c_1=i$ and $c_k=0$ for $k \in \Z \setminus \{-1,1\}$. The constant $\s$ is set to one. The above plots are contour plots of the function $Q \coloneqq ||V_gf|^2-|V_g(f_\times)|^2|$ on the rectangle $[-2,2]\times[-2,2]$. We observe, that $Q$ vanishes on parallel lines of the form $R_\theta(\R \times \tfrac{1}{2}\Z) \supset R_\theta(\R \times \Z)$. In plot (A) we set $\theta=0$ and in plot (B) we set $\theta=1$. Note that plot (B) is simply a rotation of (A) since $H_1$ is an eigenfunction of the fractional Fourier transform.}
\label{fig:hermite}
\end{figure}

\begin{figure}
\centering
\begin{subfigure}{.5\textwidth}
  \centering
  \includegraphics[width=0.9\linewidth]{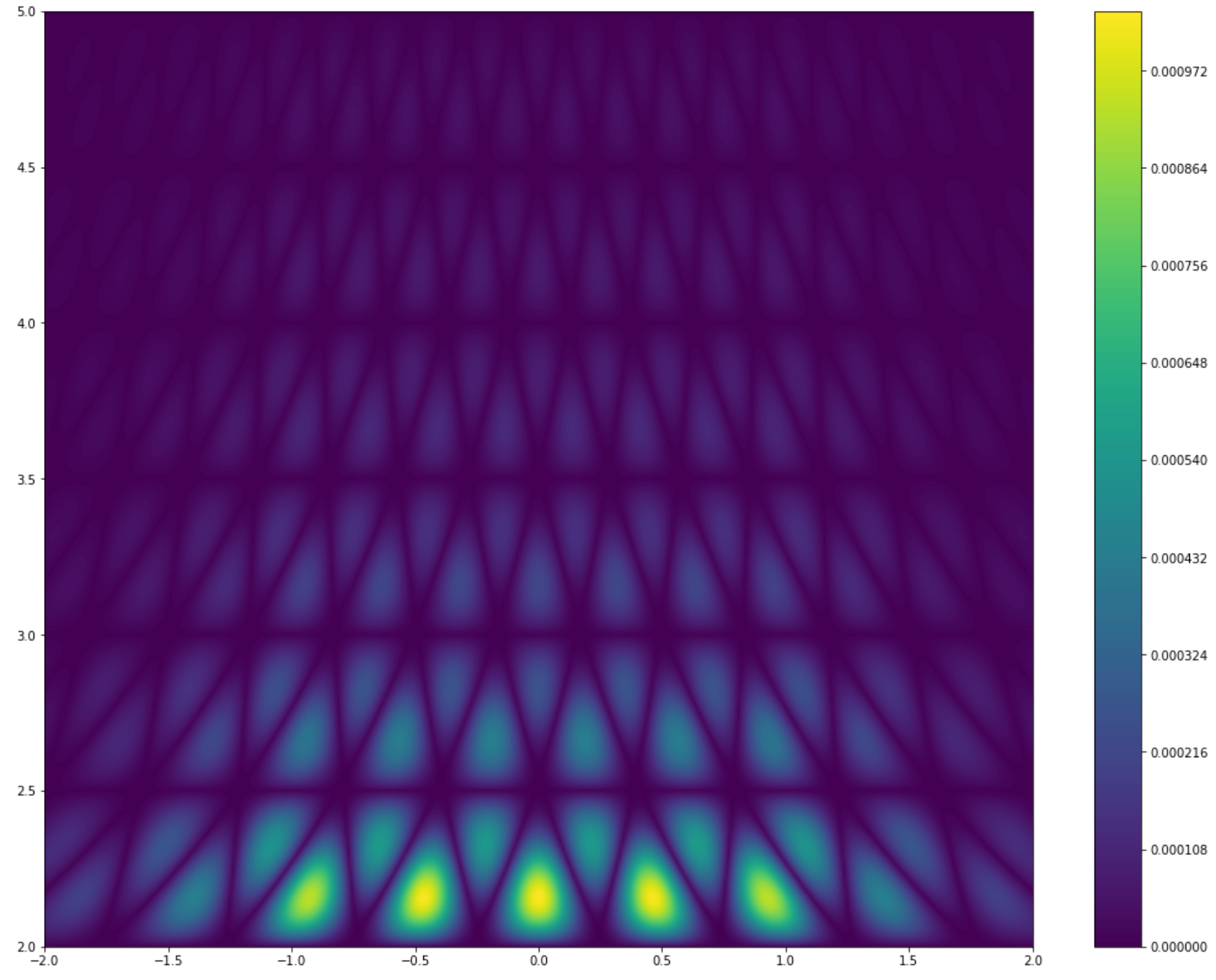}
  \caption{}
  \label{fig:sub3}
\end{subfigure}%
\begin{subfigure}{.5\textwidth}
  \centering
  \includegraphics[width=0.9\linewidth]{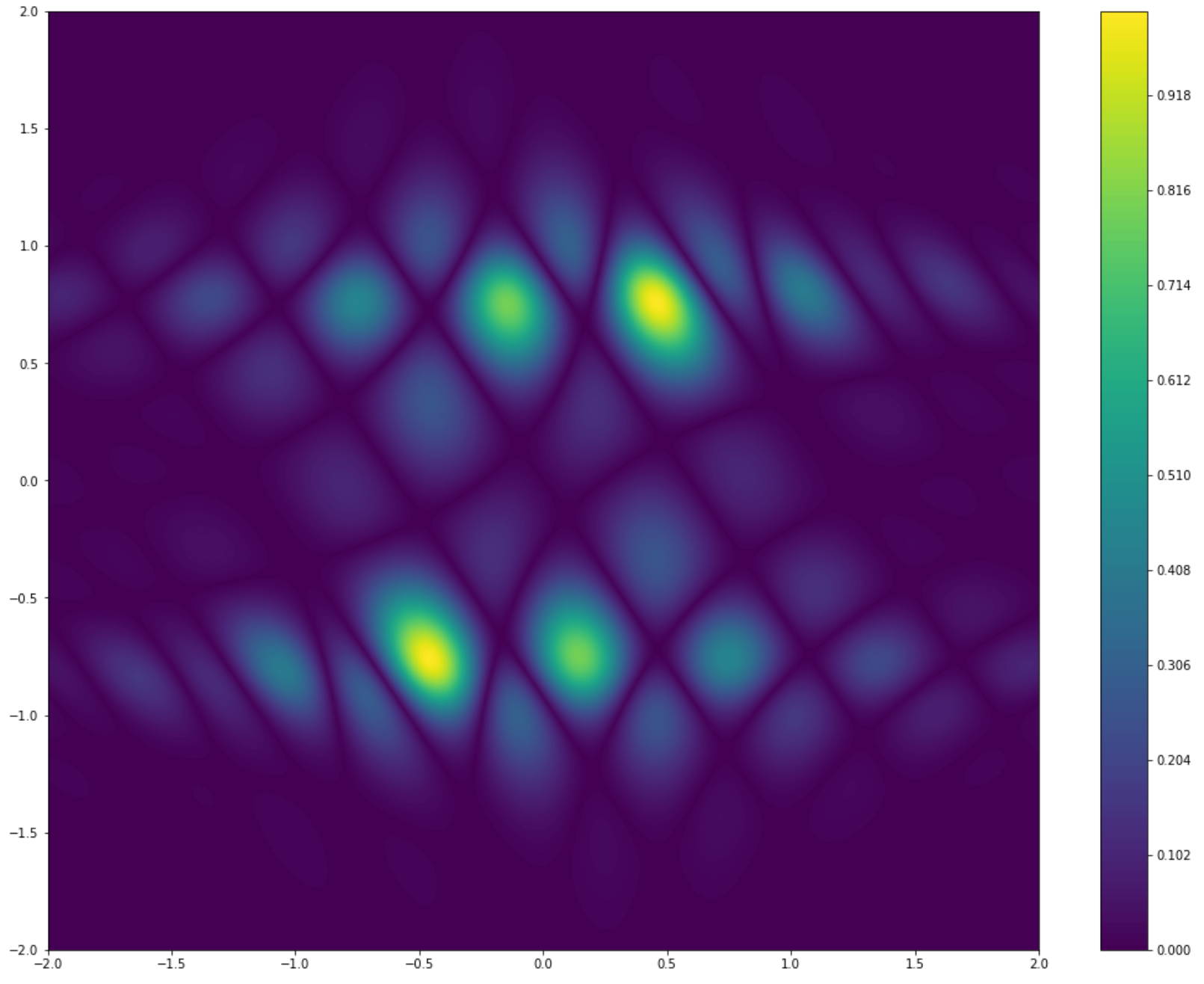}
  \caption{}
  \label{fig:sub4}
\end{subfigure}
\caption{Visualisation of the conclusion of Theorem \ref{theorem:parallel_lines} with window function $g(t)\coloneqq e^{-|t|}+\frac{1}{1+t^2}$. The function $f \in \mathcal{V}_\s^\theta(\mathcal{R} g)$ is chosen to have defining sequence $\{ c_k \} \in \ell^2_\mathcal{O}(\Z)$ with $c_{-1}=1,c_1=i$ and $c_k=0$ for $k \in \Z \setminus \{-1,1\}$. The constant $\s$ is set to one. The above plots are contour plots of the function $Q \coloneqq ||V_gf|^2-|V_g(f_\times)|^2|$ on the rectangle $[-2,2]\times[2,5]$ resp. $[-2,2]\times[-2,2]$. We observe, that $Q$ vanishes on parallel lines of the form $R_\theta(\R \times \tfrac{1}{2}\Z) \supset R_\theta(\R \times \Z)$. In plot (A) we set $\theta=0$ and in plot (B) we set $\theta=-1$.}
\label{fig:exp_arctan}
\end{figure}

\begin{proof}
(1) Assume by contradiction that $f \sim f_\times$ and let $\nu \in \T$ such that $f = \nu f_\times$. Further, let $J \coloneqq \{ k \in \Z : c_k \neq 0 \}$. Since $\{ c_k \} \in c_{00}(\Z)$, the set $J$ is finite and we have
$$
\sum_{k \in J} (c_k - \nu \overline{c_k}) T_{\s k}^\theta u = \sum_{k \in J} (c_k - \nu \overline{c_k}) \ft_{-\theta} T_{\s k} \ft_\theta u = 0.
$$
The invertibility of the fractional Fourier transform implies that
$$
\sum_{k \in J} (c_k - \nu \overline{c_k}) T_{\s k}(\ft_\theta u) = 0.
$$
Since $0 \neq u \in \lt$ it holds that $\ft_\theta u \neq 0$. Therefore, the system of translates $\{ T_{\s k}(\ft_\theta u) : k \in \Z \}$ is a linearly independent system in the vector space $\lt$ \cite[Proposition 9.6.2]{christensenBook}. From the additional property that $|J|<\infty$, it follows that
\begin{equation}\label{eq:ck}
    c_k - \nu \overline{c_k}=0 \ \ \ \forall k \in J.
\end{equation}
Equation \eqref{eq:ck} implies that the map $C : J \to \C$, defined by $C(k)=c_k$, is equivalent to its complex conjugate, $C \sim \overline{C}$. By Lemma \ref{lemma:funcion_values_lines}, the values $\{ c_k \}$ lie on a line in the complex plane passing through the origin, contradicting the assumption that $\{ c_k \} \in \ell^2_\mathcal{O}(\Z)$.

(2) Since $\{ T_{\s k}^\theta u : k \in \Z \}$ is a Bessel sequence and $\ell_\mathcal{O}^2(\Z) \subset \ell^2(\Z)$, the series $f = \sum_{k \in \Z} c_k T_{\s k}^\theta u$ and $f_\times = \sum_{k \in \Z} \overline{c_k} T_{\s k}^\theta u$ converge unconditionally by Corollary \ref{cor:uncond_convergence}.
In a similar fashion as in the first part of the present proof, we assume by contradiction that $f \sim f_\times$. This implies that there exists a $\nu \in \T$ such that
$$
\sum_{k \in \Z} (c_k - \nu \overline{c_k}) T_{\s k}^\theta u = 0.
$$
By assumption, the system $\{ T_{\s k}^\theta u : k \in \Z \}$ is $\omega$-independent. Hence, $c_k = \nu \overline{c_k}$ for every $k \in \Z$ which shows that the points $\{ c_k \}$ lie on a line in the complex plane passing through the origin. This is a contradiction to the assumption that $\{ c_k \} \in \ell^2_\mathcal{O}(\Z)$.
\end{proof}

\begin{remark}[$\omega$-independent Bessel sequences: necessary and sufficient conditions]
Let $u\in \lt$. Since $\ft_\theta$ is a unitary operator, it follows from the definition of the fractional Fourier shift that the system $\{ T_{\s k}^\theta u : k \in \Z \}$ is an $\omega$-independent Bessel sequence if and only if the system $\{ T_{\s k} h : k \in \Z \}$ is an $\omega$-independent Bessel sequence where $h \coloneqq \ft_\theta u$. Assume for the sake of simplicity that $\s = 1$. Then $\{ T_k h : k \in \Z \}$ is a Bessel sequence if and only if the \emph{periodization} $\Phi_h$ of $|\ft h|^2$, defined by
$$
\Phi_h(t) = \sum_{k \in \Z} |\ft h (t + k)|^2,
$$
is an element of $L^\infty[0,1]$ \cite[Theorem 10.19]{basisTheoryPrimer}. Assume, in addition, that $h \in W_0$ where $W_0$ denotes the Wiener amalgam space,
$$
W_0 = \left \{ h \in C(\R) : \sum_{k \in \Z} \max_{t \in [k,k+1]} |h(t)| < \infty \right \}.
$$
Then $\{ T_k h : k \in \Z \}$ is $\ell^\infty$-independent, i.e. $\sum_{k \in \Z} c_k T_k h \neq 0$ for every $\{ c_k \} \in \ell^\infty(\Z) \setminus \{ 0 \}$, if and only if the periodization $\Phi_h$ does not vanish on $[0,1]$ \cite[Theorem 2.1]{grst}. This holds, in particular, under the assumptions of the previous theorem where $\{ c_k \} \in \ell^2(\Z) \subset \ell^\infty(\Z)$.
\end{remark}

\subsection{Consequences for STFT phase retrieval}

At this juncture, we are prepared to transfer the foregoing considerations to the uniqueness problem arising in STFT phase retrieval. Recall that $(g,\mathcal{L})$ is said to be a uniqueness pair of the STFT phase retrieval problem with window function $g \in \lt$ and sampling set $\mathcal{L} \subseteq \R^2$ if every $f \in \lt$ is determined up to a global phase by $|V_gf(\mathcal{L})|$ (see Definition \ref{definition:uniqueness_pair}). Call a set $\mathcal{P} \subset \R^2$ a set of shifted parallel lines if
$$
\mathcal{P} = z + R_\theta (\R \times h \Z)
$$
for some $z \in \R^2, h>0$ and some $\theta \in \R$ ($R_\theta$ denotes the rotation matrix defined in equation \eqref{def:rotation_matrix}). Combining the statements derived in Section \ref{subsec:general_case} yields the following result.

\begin{theorem}\label{thm:main_parallel_lines}
Let $g \in \lt$ be a window function and $\mathcal{P} \subset \R^2$. Then $(g,\mathcal{P})$ is never a uniqueness pair of the STFT phase retrieval problem, provided that $\mathcal{P}$ is a set of shifted parallel lines.
\end{theorem}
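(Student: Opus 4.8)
The plan is to reduce the statement to Theorems~\ref{theorem:parallel_lines} and~\ref{theorem:non_uniqueness}, using the covariance property of the STFT to absorb the translation vector defining the set of shifted parallel lines. First I would dispose of the degenerate case $g=0$: then $V_gf\equiv 0$ for every $f\in\lt$, so any two non-equivalent functions (say the first two Hermite basis functions $H_0,H_1$) produce the same, vanishing, spectrogram samples on $\mathcal{P}$, and $(g,\mathcal{P})$ trivially fails to be a uniqueness pair. Hence I may henceforth assume $g\neq 0$, so that $\mathcal{R}g\neq 0$ and Theorem~\ref{theorem:non_uniqueness} is applicable with generator $u=\mathcal{R}g$.

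Next I would write $\mathcal{P}=z+R_\theta(\R\times h\Z)$ with $z=(z_1,z_2)\in\R^2$, $h>0$, $\theta\in\R$, and set $\s\coloneqq 1/h>0$, so that $R_\theta(\R\times h\Z)=R_\theta(\R\times\tfrac1\s\Z)$. I would then pick a finitely supported sequence in $\ell^2_\mathcal{O}(\Z)\cap c_{00}(\Z)$, for instance $c_{-1}=1$, $c_1=i$, and $c_k=0$ otherwise, and let $f\coloneqq\sum_{k\in\Z}c_k T_{\s k}^\theta(\mathcal{R}g)\in\mathcal{V}_\s^\theta(\mathcal{R}g)$, with $f_\times\coloneqq\sum_{k\in\Z}\overline{c_k}T_{\s k}^\theta(\mathcal{R}g)$ as in Section~\ref{subsec:general_case}. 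By Theorem~\ref{theorem:parallel_lines} one has $|V_gf|=|V_g(f_\times)|$ on $R_\theta(\R\times\tfrac1\s\Z)$, and by Theorem~\ref{theorem:non_uniqueness}(1) one has $f\nsim f_\times$.

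Finally I would transport these functions by $z$: set $f_1\coloneqq T_{z_1}M_{z_2}f$ and $f_2\coloneqq T_{z_1}M_{z_2}f_\times$. Since $T_{z_1}M_{z_2}$ is a bijection of $\lt$, we have $f_1\sim f_2$ if and only if $f\sim f_\times$, hence $f_1\nsim f_2$. On the other hand, Lemma~\ref{lemma:comm_relations}(6) gives $|V_gf_1(x,\omega)|=|V_gf(x-z_1,\omega-z_2)|$ and $|V_gf_2(x,\omega)|=|V_g(f_\times)(x-z_1,\omega-z_2)|$ for all $(x,\omega)\in\R^2$. For $(x,\omega)\in\mathcal{P}$ we have $(x,\omega)-z\in R_\theta(\R\times\tfrac1\s\Z)$, so the equality from Theorem~\ref{theorem:parallel_lines} yields $|V_gf_1(x,\omega)|=|V_gf_2(x,\omega)|$, i.e. $|V_gf_1(\mathcal{P})|=|V_gf_2(\mathcal{P})|$ while $f_1\nsim f_2$; thus $(g,\mathcal{P})$ is not a uniqueness pair.

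Since all the analytic substance is already contained in Theorem~\ref{theorem:parallel_lines} (the construction of $f$ and $f_\times$), I do not expect a genuine obstacle here. The only points requiring care are the bookkeeping for the rotation and translation --- in particular observing that the component of $z$ parallel to the lines merely reparametrizes $\mathcal{P}$, and that subtracting the full vector $z$ lands one back exactly on $R_\theta(\R\times\tfrac1\s\Z)$ --- together with the trivial case $g=0$. The covariance identity Lemma~\ref{lemma:comm_relations}(6) is precisely what converts the ``lines through the origin'' statement into the ``shifted lines'' statement.
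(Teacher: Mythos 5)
Your proposal is correct and follows essentially the same route as the paper: dispose of $g=0$, invoke Theorem~\ref{theorem:parallel_lines} and Theorem~\ref{theorem:non_uniqueness}(1) for a finitely supported defining sequence in $\ell^2_\mathcal{O}(\Z)\cap c_{00}(\Z)$ (your explicit choice $c_{-1}=1$, $c_1=i$ works), and use the covariance property Lemma~\ref{lemma:comm_relations}(6) to absorb the translation $z$. The only cosmetic difference is that the paper writes the transported functions as $M_b T_a h$ rather than $T_{z_1}M_{z_2}f$, which changes them only by a unimodular constant and hence is immaterial.
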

\begin{proof}
The case $g = 0$ is trivial. Therefore, assume that $g$ does not vanish identically. Since $\mathcal{P}$ is a set of shifted parallel lines, there exist constants $\theta \in \R$ and $\s >0$ as well as a vector $z=(a,b) \in \R^2$ such that
$$
\mathcal{P} = z+ R_\theta (\R \times \tfrac{1}{\s}\Z).
$$

\textbf{Case 1: $z=0$.} Choose a function $f \in \mathcal{V}_\s^\theta(\mathcal{R}g)$ with defining sequence belonging to the (non-empty) intersection $\ell^2_\mathcal{O}(\Z) \cap c_{00}(\Z)$. By Theorem \ref{theorem:parallel_lines}, we have $|V_gf(R_\theta (\R \times \tfrac{1}{\s}\Z))| = |V_g(f_\times)(R_\theta (\R \times \tfrac{1}{\s}\Z))|$ whereas $f \nsim f_\times$ by Theorem \ref{theorem:non_uniqueness}. This yields the assertion for $z=0$.

\textbf{Case 2: $z \neq 0$.} If $p = (x,\omega) \in \R^2$ then the covariance property of the STFT (Lemma \ref{lemma:comm_relations}(6)) shows that for every $f \in \lt$ one has
$$
V_gf(z+p) = V_gf(a+x,b+\omega) = e^{-2\pi i a \omega} V_g(T_{-a}M_{-b}f)(p).
$$
Taking absolute values, it follows that if $h_1,h_2 \in \lt$ are two functions which produce identical spectrogram values on the parallel lines $R_\theta (\R \times \tfrac{1}{\s}\Z)$ then $f_1 \coloneqq M_bT_a h_1, f_2 \coloneqq M_b T_a h_2$ produce identical spectrogram values on the shifted parallel lines $\mathcal{P}=z+R_\theta (\R \times \tfrac{1}{\s}\Z)$. Moreover, one has $h_1 \sim h_2$ if and only if $f_1 \sim f_2$ (the equivalence relation $\sim$ is invariant under time-frequency shifts). Therefore, if $h \in \mathcal{V}_\s^\theta(\mathcal{R}g)$ has defining sequence in the intersection $\ell^2_\mathcal{O}(\Z) \cap c_{00}(\Z)$, then the spectrograms of $f_1 = M_bT_a h$ and $f_2 = M_bT_a(h_\times)$ agree on $\mathcal{P}$ whereas $f_1 \nsim f_2$.
\end{proof}

Note that if the window function $g$ is assumed to be the centered Gaussian $g(t)=e^{-\pi t^2}$ then it was shown in \cite{alaifari2020phase} that $(g,\mathcal{P})$ is not a uniqueness pair provided that $\mathcal{P}$ is a set of shifted parallel lines. In comparison, Theorem \ref{thm:main_parallel_lines} above makes no assumption on the window function $g$. In analogy to the definition of a set of shifted parallel lines we say that $\mathcal{S} \subset \R^2$ is a shifted lattice if $\mathcal{S}$ arises from an ordinary lattice $\mathcal{L}=L\Z^2, L \in \mathrm{GL}_2(\R),$ via a translation by a vector $z \in \R^2$, i.e.
$$
\mathcal{S} = z + \mathcal{L}.
$$
As a consequence of Theorem \ref{thm:main_parallel_lines} we obtain the statement that lattices never achieve uniqueness no matter how the window function $g$ is chosen.

\begin{theorem}\label{thm:main_lattice}
Let $g \in \lt$ be a window function and $\mathcal{S} \subset \R^2$. Then $(g,\mathcal{S})$ is never a uniqueness pair of the STFT phase retrieval problem, provided that $\mathcal{S}$ is a shifted lattice.
\end{theorem}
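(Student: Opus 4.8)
The plan is to reduce the statement to Theorem \ref{thm:main_parallel_lines} by observing that a shifted lattice is contained in a set of shifted parallel lines.

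First I would dispose of the trivial case $g = 0$: then $V_gf \equiv 0$ for every $f \in \lt$, so the implication $|V_gf(\mathcal{S})| = |V_gh(\mathcal{S})| \implies f \sim h$ fails and $(g,\mathcal{S})$ is not a uniqueness pair. Hence assume $g \neq 0$ from now on.

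Next, write $\mathcal{S} = z + L\Z^2$ with $z \in \R^2$ and $L \in \mathrm{GL}_2(\R)$, and express $L$ through its columns, $L = (v_1 \mid v_2)$, so that $L\Z^2 = \{ m v_1 + n v_2 : m,n \in \Z \}$. Let $\theta \in \R$ be a polar angle of $v_1$, i.e. $v_1 = \rho R_\theta e_1$ with $\rho = |v_1| > 0$ and $e_1 = (1,0)^\top$, and set $(\alpha,\beta)^\top \coloneqq R_{-\theta} v_2$. Since $v_1$ and $v_2$ are linearly independent (as $L$ is invertible), we have $\beta \neq 0$. Then for all $m,n \in \Z$,
$$
m v_1 + n v_2 = R_\theta \big( m\rho + n\alpha,\; n\beta \big)^\top \in R_\theta(\R \times \beta\Z),
$$
so that $L\Z^2 \subseteq R_\theta(\R \times h\Z)$ with $h \coloneqq |\beta| > 0$. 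Consequently the shifted lattice satisfies $\mathcal{S} \subseteq \mathcal{P}$, where $\mathcal{P} \coloneqq z + R_\theta(\R \times h\Z)$ is a set of shifted parallel lines in the sense of the definition preceding Theorem \ref{thm:main_parallel_lines}.

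Finally I would apply Theorem \ref{thm:main_parallel_lines} to $\mathcal{P}$: since $g \neq 0$, there exist $f_1, f_2 \in \lt$ with $f_1 \nsim f_2$ and $|V_gf_1(\mathcal{P})| = |V_gf_2(\mathcal{P})|$. Restricting these phaseless measurements from $\mathcal{P}$ to the subset $\mathcal{S}$ yields $|V_gf_1(\mathcal{S})| = |V_gf_2(\mathcal{S})|$ while still $f_1 \nsim f_2$, so $(g,\mathcal{S})$ is not a uniqueness pair. I do not expect any substantial obstacle here; the only point needing (routine) care is verifying that the family of parallel lines carrying the lattice is genuinely of the prescribed form $R_\theta(\R \times h\Z)$ — equally spaced with nonzero spacing — which is precisely where the invertibility of $L$, via $\beta \neq 0$, is used.
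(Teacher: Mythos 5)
Your proof is correct and takes essentially the same route as the paper's: both reduce the claim to Theorem~\ref{thm:main_parallel_lines} by observing that the shifted lattice $\mathcal{S}=z+L\Z^2$ is contained in a set of shifted parallel lines $z+R_\theta(\R\times h\Z)$ and that phaseless measurements restrict from the larger set to the smaller one. The only difference is that the paper asserts this containment without proof, while you verify it explicitly (taking $\theta$ to be the polar angle of the first column of $L$ and using invertibility to get $h=|\beta|>0$), which is a fuller but not a different argument.
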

\begin{proof}
Since $\mathcal{S}$ is a shifted lattice, there exist constants $\theta \in \R$ and $\s >0$ as well as a vector $z \in \R^2$ such that
$$
\mathcal{S} \subset z + R_\theta (\R \times \tfrac{1}{\s}\Z).
$$
Thus, the assertion follows from Theorem \ref{thm:main_parallel_lines}.
\end{proof}

\begin{remark}
If $\mathcal{S} = z + L\Z^2, z \in \R^2,$ with a non-invertible matrix $L \in \R^{2 \times 2}$ then the column vectors of $L$ are linearly dependent which implies that $\mathcal{S}$ is contained in a single line. Clearly, the conclusions made in Theorem \ref{thm:main_lattice} are still valid in this setting.
\end{remark}

\begin{remark}
A subset $\mathcal{C}\subseteq X$ of a (real or complex) vector space $X$ is said to be a cone if $\kappa \mathcal{C} \subseteq \mathcal{C}$ for every $\kappa > 0$. A cone is called infinite-dimensional if it is not contained in any finite-dimensional subspace of $X$. Observe that the intersection $\ell^2_\mathcal{O}(\Z) \cap c_{00}(\Z)$ is a cone in the vector space of complex sequences and this cone is infinite-dimensional. Thus, for every $\s>0, \theta \in \R$ and $z=(a,b) \in \R^2$ the set
$$
\mathcal{K}_\s^\theta(z,g) \coloneqq \left \{ M_bT_a \sum_{k \in \Z} c_k T_{\s k}^\theta g : \{ c_k \} \in \ell^2_\mathcal{O}(\Z) \cap c_{00}(\Z)  \right \}
$$
is an infinite-dimensional cone in $\lt$. In an abstract language, Theorem \ref{thm:main_parallel_lines} and Theorem \ref{thm:main_lattice} imply that for every window function $g \in \lt$ and every shifted lattice $\mathcal{S}$ (resp. set of shifted parallel lines $\mathcal{P}$), one can associate an infinite-dimensional cone of the form $\mathcal{K}_\s^\theta(z,g)$ with the following property: if $f_1 \in \mathcal{K}_\s^\theta(z,g)$ then there is an $f_2 \in \mathcal{K}_\s^\theta(z,g)$ such that
\begin{enumerate}
    \item $|V_gf_1(\mathcal{S})| = |V_gf_2(\mathcal{S})|$ (resp. $|V_gf_1(\mathcal{P})| = |V_gf_2(\mathcal{P})|$)
    \item $f_1 \nsim f_2$.
\end{enumerate}
\end{remark}

\subsection{Real-valuedness}

As outlined in the introduction, real-valued window functions are prominent choices in many applications. Note that the previously derived function pairs $f,f_\times$ which implied the non-uniqueness of the STFT phase retrieval problem from lattice samples were, in general, complex-valued (even if the window function is real-valued). We now examine the question of whether a prior real-valuedness assumption on the underlying signal space achieves uniqueness from lattice samples, i.e. one specializes the considered input functions to belong to the space
$
L^2(\R,\R) = \left \{ f \in \lt : f \ \text{is real-valued} \right \}.
$
To that end, we first introduce the subspace of Hermitian sequences.

\begin{definition}
The subspace $\ell^2_\mathcal{H}(\Z)$ of Hermitian sequences is defined by
$$
\ell^2_\mathcal{H}(\Z) \coloneqq \left \{ \{ c_k \} \in \ell^2(\Z) : c_k=\overline{c_{-k}} \right \}.
$$
\end{definition}

As for the set $\ell^2_\mathcal{O}(\Z)$, the space $\ell^2_\mathcal{H}(\Z)$ is invariant under complex conjugation.
Functions $f \in \mathcal{V}_\s(u) = \mathcal{V}_\s^0(u)$, with defining sequence belonging to $\ell^2_\mathcal{H}(\Z)$, have the following property.

\begin{proposition}\label{prop:r_v}
Let $u \in \lt$ and suppose that $h \in \mathcal{V}_\s(u)$. If $h$ has defining sequence $\{ c_k \} \in \ell^2_\mathcal{H}(\Z) \cap c_{00}(\Z)$ then $\ft h$ is real-valued provided that $\ft u$ is real-valued. If, in addition, the system of translates $\{ T_{\s k} u : k\in \Z \}$ forms a Bessel sequence, then the same conclusion holds true, under the weaker assumption that $\{ c_k \} \in \ell^2_\mathcal{H}(\Z)$.
\end{proposition}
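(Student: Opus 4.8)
The plan is to compute $\ft h$ explicitly as a product of $\ft u$ with a trigonometric multiplier built from the defining sequence, and then to read off that this multiplier is real-valued precisely because $\{c_k\}$ is Hermitian. Concretely, writing $h = \sum_{k \in \Z} c_k T_{\s k} u$ and applying the commutation relation $\ft T_{\s k} = M_{-\s k}\ft$ from Lemma \ref{lemma:comm_relations}(2), one obtains
\[
\ft h(\omega) = \Bigl( \sum_{k \in \Z} c_k e^{-2\pi i \s k \omega} \Bigr)\, \ft u(\omega) =: m(\omega)\, \ft u(\omega).
\]
In the case $\{c_k\} \in c_{00}(\Z)$ this is a finite sum and no convergence issue arises; since $\ft u$ is real-valued by hypothesis, it remains only to check that the multiplier $m$ takes real values.

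This is where the Hermitian symmetry enters. Taking complex conjugates and then substituting $k \mapsto -k$ in the summation index,
\[
\overline{m(\omega)} = \sum_{k \in \Z} \overline{c_k}\, e^{2\pi i \s k \omega} = \sum_{k \in \Z} \overline{c_{-k}}\, e^{-2\pi i \s k \omega} = \sum_{k \in \Z} c_k e^{-2\pi i \s k \omega} = m(\omega),
\]
where the third equality uses $c_k = \overline{c_{-k}}$. Hence $m(\omega) \in \R$ for every $\omega$, so $\ft h = m \cdot \ft u$ is real-valued, which proves the first assertion.

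For the second assertion I would invoke Corollary \ref{cor:uncond_convergence} with $\theta = 0$: if $\{T_{\s k} u : k \in \Z\}$ is a Bessel sequence, then for $\{c_k\} \in \ell^2_\mathcal{H}(\Z) \subset \ell^2(\Z)$ the series $h = \sum_k c_k T_{\s k} u$ converges unconditionally in $\lt$, and since $\ft$ is unitary (hence continuous) the series $\ft h = \sum_k c_k M_{-\s k}\ft u$ converges unconditionally in $\lt$ as well. The only mildly delicate point — and the main thing to be careful about — is that real-valuedness must survive this limiting process. This is handled by passing to the symmetric partial sums $S_N \coloneqq \sum_{|k| \le N} c_k M_{-\s k}\ft u$: truncating a Hermitian sequence symmetrically leaves it Hermitian, so the computation above shows each $S_N$ is real-valued; by unconditional (hence order-independent) convergence the subsequence $(S_N)_{N}$ still converges to $\ft h$ in $\lt$; and an $\lt$-limit of real-valued functions is real-valued almost everywhere. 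Therefore $\ft h$ is real-valued, completing the proof.
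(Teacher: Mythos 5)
Your proof is correct and follows essentially the same route as the paper: both arguments reduce to the fact that a Hermitian sequence produces a real-valued trigonometric sum multiplying the real-valued $\ft u$ (the paper phrases this by pairing the $k$ and $-k$ terms rather than factoring out a multiplier $m$), and both handle the $\ell^2$ case via Corollary \ref{cor:uncond_convergence} and continuity of $\ft$. Your explicit treatment of the symmetric partial sums and the closedness of the real-valued functions in $\lt$ is a slightly more careful rendering of the paper's closing remark, but it is the same argument.
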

\begin{proof}
Suppose that $h = \sum_{k \in \Z} c_k T_{\s k} u$ is such that $\{ c_k \} \in \ell^2_\mathcal{H}(\Z) \cap c_{00}(\Z)$. Then
\begin{equation*}
    \begin{split}
        \ft h = \sum_{k \in \Z} c_k M_{-\s k}\ft u = c_0 \ft u + \sum_{k=1}^\infty c_k M_{-\s k} \ft u + \sum_{k=1}^\infty c_{-k} M_{\s k} \ft u
    \end{split}
\end{equation*}
and since $\ft u$ is real-valued and $\{ c_k \} \in \ell^2_\mathcal{H}(\Z)$ we have $c_0 = \overline{c_0}$ which implies that $c_0 \in \R$. Moreover,
$$
\sum_{k=1}^\infty c_{-k} M_{\s k} \ft u = \overline{\sum_{k=1}^\infty c_{k} M_{-\s k} \ft u},
$$
thereby proving the first part of the statement. To prove the second part, assume that $\{ T_{\s k} u : k\in \Z \}$ forms a Bessel sequence. By Corollary \ref{cor:uncond_convergence} the series $\sum_{k \in \Z} c_k T_{\s k} u$ converges unconditionally. Invoking the continuity and linearity of the Fourier transform yields the statement with the same argument as above.
\end{proof}

As an application of Proposition \ref{prop:r_v}, we construct non-equivalent, real-valued functions for which their spectrograms agree on certain lattices in the time-frequency plane.

\begin{theorem}\label{thm:main_R_valued}
Let $g \in L^2(\R,\R)$ be a real-valued window function and let $\mathcal{L}=L\Z^2 \subset \R^2$ be a lattice with generating matrix
\begin{equation*}
    L = \begin{pmatrix}
a & b\\
c & d
\end{pmatrix} \in \mathrm{GL}_2(\R) .
\end{equation*}
If $a,b$ are linearly dependent over $\Q$ and $c,d$ are linearly dependent over $\Q$ then there exist two real-valued functions $f_1,f_2 \in L^2(\R,\R)$ with the property $|V_gf_1(\mathcal{L})| = |V_gf_2(\mathcal{L})|$ but $f_1 \nsim f_2$.
\end{theorem}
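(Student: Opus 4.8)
\emph{Plan.} The idea is to carry out the construction on the Fourier side, where Proposition \ref{prop:r_v} certifies real-valuedness of $f_j=\ft H_j$ from a Hermitian-symmetry condition on a defining sequence of $H_j$, and where Theorem \ref{theorem:parallel_lines} (with $\theta=0$) supplies two shift-invariant functions with matching spectrograms along a family of lines.

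\textbf{Step 1: reduction to a rectangular lattice.} The case $g=0$ is trivial, so assume $g\neq0$. A point of $\mathcal{L}$ has the form $(ma+nb,\,mc+nd)$ with $m,n\in\Z$, so the set of its first coordinates is the additive subgroup $\{ma+nb:m,n\in\Z\}\subseteq\R$. As $a,b$ are linearly dependent over $\Q$, this subgroup is not dense, hence equals $\alpha\Z$ for some $\alpha\geq0$; moreover $\alpha\neq0$, since $a=b=0$ would make the first row of $L$ vanish, contradicting $L\in\mathrm{GL}_2(\R)$. The same argument applied to $c,d$ shows that the second coordinates fill out $\beta\Z$ with $\beta\neq0$. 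Thus $\mathcal{L}\subseteq\alpha\Z\times\beta\Z$, and since it suffices to produce $f_1,f_2$ whose spectrograms already agree on the larger set $\alpha\Z\times\beta\Z$, we may assume $\mathcal{L}=\alpha\Z\times\beta\Z$ with $\alpha>0$; set $\s:=1/\alpha$.

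\textbf{Step 2: the real pair on the Fourier side.} Put $u:=\ft g$. Since $\ft^2=\mathcal{R}$ we have $\ft u=\mathcal{R}g$, which is real-valued because $g$ is; note also $u\neq0$. Choose a sequence $\{c_k\}$ in the non-empty set $\ell^2_\mathcal{H}(\Z)\cap\ell^2_\mathcal{O}(\Z)\cap c_{00}(\Z)$ — for instance $c_0=1$, $c_1=i=\overline{c_{-1}}$, and $c_k=0$ for $|k|\geq2$ — and define
$$
H_1:=\sum_{k\in\Z}c_k\,T_{\s k}u\in\mathcal{V}_\s(u),\qquad H_2:=(H_1)_\times=\sum_{k\in\Z}\overline{c_k}\,T_{\s k}u,\qquad f_j:=\ft H_j\ \ (j=1,2).
$$
Both $\{c_k\}$ and $\{\overline{c_k}\}$ lie in $\ell^2_\mathcal{H}(\Z)$ and $\ft u$ is real, so Proposition \ref{prop:r_v} gives $f_1,f_2\in L^2(\R,\R)$.

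\textbf{Step 3: matching spectrograms on $\mathcal{L}$.} From $\ift=\mathcal{R}\ft$ we get $u=\ft g=\mathcal{R}(\ift g)=\mathcal{R}\tilde g$ with $\tilde g:=\ift g$, hence $H_1\in\mathcal{V}_\s(\mathcal{R}\tilde g)$ and $H_2=(H_1)_\times$ in the sense of Theorem \ref{theorem:parallel_lines}. Applying that theorem to the window $\tilde g$ with order $\theta=0$ and step size $\s$ yields
$$
\bigl|V_{\tilde g}H_1(x',\omega')\bigr|=\bigl|V_{\tilde g}H_2(x',\omega')\bigr|\qquad\text{for all }x'\in\R,\ \omega'\in\tfrac1\s\Z.
$$
Separately, the commutation relations of Lemma \ref{lemma:comm_relations} give, for every $H\in\lt$, the Fourier-covariance identity $V_g(\ft H)(x,\omega)=e^{-2\pi i x\omega}\,V_{\tilde g}H(-\omega,x)$, so $|V_gf_j(x,\omega)|=|V_{\tilde g}H_j(-\omega,x)|$. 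Combining the two displays, $|V_gf_1(x,\omega)|=|V_gf_2(x,\omega)|$ whenever $x\in\tfrac1\s\Z=\alpha\Z$; since every point of $\mathcal{L}=\alpha\Z\times\beta\Z$ has first coordinate in $\alpha\Z$, this gives $|V_gf_1(\mathcal{L})|=|V_gf_2(\mathcal{L})|$.

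\textbf{Step 4: non-equivalence, and the main obstacle.} Because $\ft$ is a bijection of $\lt$ commuting with multiplication by scalars, $f_1\sim f_2$ is equivalent to $H_1\sim(H_1)_\times$; but $u\neq0$ and $\{c_k\}\in\ell^2_\mathcal{O}(\Z)\cap c_{00}(\Z)$, so Theorem \ref{theorem:non_uniqueness}(1) yields $H_1\nsim(H_1)_\times$, whence $f_1\nsim f_2$. The step I expect to require the most care is the bookkeeping in Steps 2--3: establishing the precise Fourier-covariance identity $|V_gf_j(x,\omega)|=|V_{\tilde g}H_j(-\omega,x)|$ from the commutation relations, and checking that passing to $\ft$ turns the window $g$ into the auxiliary window $\tilde g=\ift g$ while turning the signal $H_j$ into $f_j=\ft H_j$ in exactly the way that makes Proposition \ref{prop:r_v} applicable — it is through the identity $\ft(\ft g)=\mathcal{R}g$ that the hypothesis $g\in L^2(\R,\R)$ enters.
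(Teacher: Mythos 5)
Your proof is correct and follows essentially the same route as the paper: reduce to a rectangular lattice $\alpha\Z\times\beta\Z$ using the rational dependence of the rows, then combine Theorem \ref{theorem:parallel_lines}, Theorem \ref{theorem:non_uniqueness} and Proposition \ref{prop:r_v} on a pair $h, h_\times$ with defining sequence in $\ell^2_\mathcal{O}(\Z)\cap\ell^2_\mathcal{H}(\Z)\cap c_{00}(\Z)$. The only cosmetic difference is that the paper realizes the $90^\circ$ rotation by taking $\theta=\tfrac{\pi}{2}$ in the fractional shift-invariant space $\mathcal{V}_\s^{\pi/2}(\mathcal{R}g)$, whereas you take $\theta=0$ and perform the rotation by hand via the (correctly derived) covariance identity $V_g(\ft H)(x,\omega)=e^{-2\pi i x\omega}V_{\ift g}H(-\omega,x)$.
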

\begin{proof}
The case $g = 0$ is trivial. Therefore, assume in the following that $g \neq 0$ and let $\theta \in \R$.

\textbf{Case 1: rectangular lattices.} First assume that the lattice $\mathcal{L}$ is rectangular, i.e. $\mathcal{L}=\alpha \Z \times \beta \Z$ for some $\alpha, \beta \in \R \setminus \{ 0 \}$. In this case, $\mathcal{L}$ is generated by the diagonal matrix $L = \mathrm{diag}(\alpha,\beta)$ which clearly satisfies the assumptions of the Theorem. Noting that the intersection $\ell^2_\mathcal{O}(\Z) \cap \ell^2_\mathcal{H}(\Z) \cap c_{00}(\Z)$ is non-empty, we select a function $h \in \mathcal{V}_\s^\theta(\mathcal{R}g)$ with defining sequence $\{ c_k \} \in \ell^2_\mathcal{O}(\Z) \cap \ell^2_\mathcal{H}(\Z) \cap c_{00}(\Z)$ and set
$$
f_1 = h, \ \ f_2 = h_\times.
$$
For $\theta = \frac{\pi}{2}$ we have 
\begin{equation}\label{f11}
    f_1 = \mathcal{R} \left ( \ft \sum_{k \in \Z} c_k T_{\s k}\ft \mathcal{R} g \right),
\end{equation}
and $u \coloneqq \ft \mathcal{R}g$ has a real-valued Fourier transform. Identity \eqref{f11} together with Proposition \ref{prop:r_v} shows that $f_1$ is real-valued. Since $\ell^2_\mathcal{O}(\Z), \ell^2_\mathcal{H}(\Z)$ and $c_{00}(\Z)$ are all invariant under complex conjugation we immediately conclude that $f_2$ is real-valued as well. Theorem \ref{theorem:non_uniqueness} shows that $f_1$ and $f_2$ do not agree up to a global phase. But for $\s = \frac{1}{\alpha}$ we have
$$
R_\theta(\R \times \tfrac{1}{\s}\Z) = R_{\frac{\pi}{2}}(\R \times \tfrac{1}{\s}\Z) = \tfrac{1}{\s}\Z \times \R \supset \alpha \Z \times \beta \Z,
$$
whence $|V_gf_1(\mathcal{L})|=|V_gf_2(\mathcal{L})|$ by Theorem \ref{theorem:parallel_lines}, thereby proving the statement for rectangular lattices.

\textbf{Case 2: generalization.}
Suppose that $\mathcal{L}=L\Z^2$ satisfies the assumptions above, i.e. the row elements of $L$ are linearly dependent over $\Q$. Hence, there exist $p,q \in \Q$ such that $a=pb$ and $c=qd$. Write $p \coloneqq \frac{p_1}{p_2} \in \Q$ with  $p_1 \in \Z, p_2 \in \Z \setminus \{ 0 \}$ and $q \coloneqq \frac{q_1}{q_2} \in \Q$ with $q_1 \in \Z, q_2 \in \Z \setminus \{ 0 \}$. Then for $v=(v_1,v_2) \in \Z^2$ we have
\begingroup
\renewcommand*{\arraystretch}{1.5}
\begin{equation*}
    Lv = \begin{pmatrix}
pbv_1 + b v_2\\
qdv_1 + dv_2
\end{pmatrix}
=
\underbrace{\begin{pmatrix}
\frac{b}{p_2} & 0\\
0 & \frac{d}{q_2}
\end{pmatrix}}_{\coloneqq L'}
\begin{pmatrix}
w_1\\
w_2
\end{pmatrix}
\end{equation*}
\endgroup
with $w_1 =p_1 v_1 + p_2 v_2 \in \Z$ and $w_2 = q_1 v_1 + q_2 v_2 \in \Z$. This shows that $\mathcal{L}$ is contained in a rectangular lattice generated by $L'$. Therefore, Case 1 applies and the existence of two real-valued, non-equivalent functions $f_1,f_2 \in L^2(\R,\R)$, for which their spectrograms agree on $\mathcal{L}$, follows.
\end{proof}

\section{Conclusions and open problems}\label{sec:conclusion}

In the concluding section of the paper, we shall compare our results to previous work and address open problems suggested by the statements made in Section \ref{sec:main}.

\subsection{Gaussian windows}

The authors of \cite{alaifari2020phase} show that in the case of the Gaussian window $g(t) = \varphi(t)=e^{-\pi t^2}$, there exists no lattice of the form $\mathcal{L}=\alpha \Z \times \beta \Z$ such that every $f\in\lt$ is determined up to a unimodular constant by $|V_gf(\mathcal{L})|$. In their article, the provided functions $f_1,f_2 \in \lt$ which produce the same spectrogram values on the parallel lines $\R \times \beta \Z$ but are not equivalent, $f_1 \nsim f_2$, are of the form
\begin{equation*}
    \begin{split}
        f_1(t) &= e^{-\pi t^2} \left ( \cosh(\tfrac{\pi t}{\beta}) +i \sinh (\tfrac{\pi t}{\beta}) \right ) \\
        f_2(t) &= e^{-\pi t^2} \left ( \cosh(\tfrac{\pi t}{\beta}) -i \sinh (\tfrac{\pi t}{\beta}) \right ).
    \end{split}
\end{equation*}
A simple calculation shows that $f_1,f_2 \in \mathcal{V}_{\frac{1}{2\beta}}(\varphi)$. Moreover, $f_1$ has defining sequence $\{ c_k \} \in c_{00}(\Z)$ given by
\begin{equation*}
    c_k = \begin{cases}
    \frac{1-i}{2}e^{\frac{\pi}{4b^2}}, & k=-1 \\
    \frac{1+i}{2}e^{\frac{\pi}{4b^2}}, & k=1 \\
    0 , & \text{else}
    \end{cases}
\end{equation*}
and the defining sequence of $f_2$ is given by $\{ \overline{c_k} \}$. In particular, one has $\{ c_k \} \in \ell^2_\mathcal{O}(\Z) \cap \ell^2_\mathcal{H}(\Z) \cap c_{00}(\Z)$ and $f_2 = (f_1)_\times$. This shows that $f_1$ and $f_2$ are special cases of the functions derived in Section \ref{sec:main}.

\subsection{Restriction to proper subspaces and comparison to previous work}

Theorem \ref{thm:main_lattice} states that no matter how the window function is chosen, a discretization of the STFT phase retrieval problem from lattice samples or parallel lines forces one to restrict the input space of functions to a proper subspace or subset $S \subsetneq \lt$. This was successfully done in previous work: in \cite{grohsliehr1} it was shown that if the window function is a Gaussian and the underlying signal space satisfies a mild support condition, then sampling on a lattice implies uniqueness. Additionally, in the same article, it was shown that if the signal space is a Gaussian shift-invariant space with an irrational step-size, sampling on a lattice again suffices to achieve uniqueness. In \cite{grohsliehr2} it was further proved that without an assumption on the step-size of the Gaussian shift-invariant space, the uniqueness property holds, assuming knowledge of the spectrogram on parallel lines in the time-frequency plane. Finally, the article \cite{ALAIFARI202134} establishes uniqueness results for the class of real-valued, band-limited functions from samples in a 1-dimensional lattice and window functions which satisfy a non-vanishing condition in Fourier space.

\subsection{Open problems}

In this article, we have established foundational barriers in the discretization of the STFT phase retrieval problem from lattice samples. Theorem \ref{thm:main_lattice} showed that uniqueness from (shifted) lattices is never guaranteed for the signal space $\lt$. A natural question arising at this point is the inquiry if Theorem \ref{thm:main_R_valued} also holds in this generality. For instance, a hexagonal lattice is not covered by the assumptions of Theorem \ref{thm:main_R_valued}.

\begin{problem}
Suppose that $g \in L^2(\R,\R)$ is a real-valued window function. Does there exist a lattice $\mathcal{L} \subset \R^2$ such that every real-valued map $f \in L^2(\R, \R)$ is determined up to a sign by $|V_gf(\mathcal{L})|$?
\end{problem}

A lattice is a special case of a separated (or: uniformly discrete) set. Recall that this is a set $\mathcal{L} \subset \R^2$ with the property
$$
\inf_{\substack{\ell,\ell' \in \mathcal{L} \\ \ell \neq \ell'}} |\ell - \ell'| > 0,
$$
where $|\ell - \ell'|$ denotes the Euclidean distance of $\ell$ and $\ell'$.
A fruitful question for future research suggested by the results reported here is the following.

\begin{problem}\label{problem1}
Does there exist a window function $g \in \lt$ and a separated set $\mathcal{L} \subset \R^2$ such that $(g,\mathcal{L})$ is a uniqueness pair for the STFT phase retrieval problem?
\end{problem}

A more narrow form of Problem \ref{problem1} is the question of whether or not adjoining a finite set of points $\{ p_1, \dots, p_N\} \subset \R^2$ to a lattice $\mathcal{L}$ produces a sampling set with the uniqueness property. Such questions arise in the completeness problem of complex exponentials \cite{Boivin, Levinson, REDHEFFER19771} which is equivalent to the uniqueness problem in Paley-Wiener spaces. Recall that a set $\Lambda \subset \R^d, d \in \N$, is called a uniqueness set for a subset $Q \subset C(\R^d)$ of the space of continuous functions on $\R^d$ if the implication
$$
f(\lambda) = h(\lambda) \ \forall \lambda \in \Lambda \implies f(t)=h(t) \ \forall  \in t \in \R^d
$$
holds true for all $f,h \in Q$. The notion of exactness, excess, and deficiency concerns economy \cite[Chapter 3.1]{Young}.

\begin{definition}\label{definition:deficiency}
Let $Q \subset C(\R^d), d \in \N$, and $\Lambda \subset \R^d$. Then $\Lambda$ is said to be exact in $Q$ if it is a uniqueness set for $Q$ but fails to be a uniqueness set for $Q$ on the removal of any one term of $\Lambda$. If $\Lambda$ becomes exact when $N$ terms are removed, then we say that it has excess $N$; if it becomes exact when $N$ terms $\{ p_1, \dots, p_N \} \subset \R^d$ are adjoint, then $\Lambda$ is said to have deficiency $N$. If it never becomes exact, then we say that $\Lambda$ has deficiency $N=\infty$.
\end{definition}

Now let $g \in \lt$ be a window function. The notions provided in Definition \ref{definition:deficiency} directly transfer to phase retrieval: we say that $\Lambda \subset \R^2$ is a uniqueness set for the STFT phase retrieval problem with deficiency $N \in \N_0 \cup \{ \infty \}$ if there exists a set $\{ p_1, \dots, p_N \} \subset \R^2$ (the case $N=\infty$ is included) such that every $f \in \lt$ is determined up to a global phase by $|V_gf(\Lambda \cup \{ p_1, \dots, p_N \})|$. According to Theorem \ref{thm:main_lattice}, every lattice has deficiency $N>0$.

\begin{problem}\label{problem2}
Does there exists a window functions $g \in \lt$ and a lattice $\mathcal{L} \subset \R^2$ with deficiency $N<\infty$? 
\end{problem}

Note that a positive answer to Problem \ref{problem2} provides a positive answer to Problem \ref{problem1}.

\vspace{0.5cm}

\textbf{Acknowledgements.}
The authors highly appreciate the valuable and helpful comments made by the reviewers.

\bibliographystyle{abbrv}
\bibliography{bibfile}

\end{document}